\newcommand{\N}{{\mathbb N}}
\newcommand{\R}{{\mathbb R}}
\newcommand{\W}{W_0^{1,2}(\Omega)}
\newtheorem{theorem}{Theorem}
\newtheorem{lemma}[theorem]{Lemma}
\newtheorem{remark}[theorem]{Remark}
\begin{document}
	
	\hfill\today\bigskip
	
	\title[]{Positive solutions for a Kirchhoff problem of Brezis-Nirenberg type in dimension four}
	
	\author{Giovanni Anello}
	\address[G. Anello]{Department of Mathematical and Computer Sciences, Physical Sciences and Earth Sciences\\
		University of Messina\\
		Viale F. Stagno d'Alcontres, 31 - 98166 Messina, Italy}
	\email{\tt ganello@unime.it}
	
	\author{Luca Vilasi$^\dagger$}
	\address[L. Vilasi]{Department of Mathematical and Computer Sciences, Physical Sciences and Earth Sciences\\
		University of Messina\\
		Viale F. Stagno d'Alcontres, 31 - 98166 Messina, Italy}
	\email{\tt lvilasi@unime.it}
	
	\keywords{Kirchhoff problem; Brezis-Nirenberg problem; dimension four; existence; positive solution\\
		\phantom{aa} 2020 AMS Subject Classification: 35J20, 35J25, 35J61\\
		\phantom{aa} $^\dagger$Corresponding author: lvilasi@unime.it.
	}

\begin{abstract}
We consider a Kirchhoff problem of Brezis-Nirenberg type in a smooth bounded domain of $\R^4$ with Dirichlet boundary conditions. Our approach, novel in this framework and based upon approximation arguments, allows us to cope with the interaction between the higher order Kirchhoff term and the critical nonlinearity, typical of the dimension four. We derive several existence results of positive solutions, complementing and improving earlier results in the literature. In particular, we provide explicit bounds of the parameters $b$ and $\lambda$ coupled, respectively, with the higher order Kirchhoff term and the subcritical nonlinearity, for which the existence of solutions occurs.
\end{abstract}

\maketitle

\section{Introduction}
Let $\Omega\subset\R^4$ be a smooth bounded domain, let $a,b,\lambda>0$, and $p\in(2,4)$. In this paper we consider the following semilinear Brezis-Nirenberg type problem,
\begin{equation}\tag{$P_\lambda$}\label{problem}
	\left\{\begin{array}{ll}
		\displaystyle{-\left( a+b\int_{\Omega}|\nabla u|^2\, dx\right)\Delta u= u^3+\lambda u^{p-1}} & \text{ in } \Omega \\
		u> 0 & \text{ in } \Omega \\
		u=0 & \text{ on } \partial\Omega.
	\end{array}\right.
\end{equation}
Problems with this structure, in which the leading operator is coupled with a non-local coefficient, date back to the work of Kirchhoff \cite{kir1883vorlesungen}, where the following equation
\begin{equation}\label{kirchhoff}
	\varrho \frac{\partial^2 u}{\partial t^2} - \left( \frac{P_0}{h}+\frac{E}{2L}\int_0^L \left| \frac{\partial u}{\partial x}\right| ^2 dx\right) \frac{\partial^2 u}{\partial x^2} = 0
\end{equation}
was proposed as a nonlinear extension of d'Alambert's wave equation for free vibrations of elastic strings. The constants in \eqref{kirchhoff} have the following meaning: $u=u(x,t)$ is the transverse string displacement at the space coordinate $x$ and time $t$, $L$ is the length of the string, $h$ is the area of the cross-section, $E$ is the Young modulus of the material, $\varrho$ is the mass density and $P_0$ is the initial tension. Problem \eqref{problem}, in this sense, represents a stationary (and multidimensional) version of \eqref{kirchhoff} and includes an external force term. The purely local case $a=1$ and $b=0$ was the subject of the groundbreaking paper \cite{bn} by Brezis and Nirenberg, which opened the way for the variational treating of critical problems with subcritical perturbations, and inspired a huge amount of subsequent papers.

Coming back to the non-local setting, over the last decades
Kirchhoff-type problems have been a very active field of investigation, both in the subcritical and in the critical regime. The latter, as well-known, is characterized by the lack of compactness of Sobolev embeddings which makes the variational approach harder. The existence and multiplicity of solutions of Kirchhoff critical problems have been recently addressed by different tools: truncation methods, Nehari manifold approach, concentration-compactness principles, just to name a few (cf. \cite{alvcorfig2010on,camvil2014a,fan2015multiple,farsil2021on,f,mamor2024asymptotic,nai2014positive,naishi2020existence} and the references enclosed).

In the very recent \cite{mamor2024asymptotic}, the asymptotic profile of positive ground states of a critical Kirchhoff equation with power-type nonlinearities was investigated via rescaling arguments in the dimensions $N=3$ and $N=4$. The authors showed in addition that this analysis is related to a mass constrained problem, for which they established the existence, non-existence and (sharp) asymptotic behavior of positive normalized solutions.

In \cite{farsil2021on} the authors considered a  Brezis-Nirenberg type problem with a general subcritical term in higher dimensions. In the case of a pure power (the one of \eqref{problem}), by carrying out a detailed fibering map analysis, they found out the existence of two critical hyperbolas dividing the plane $(a,b)$ into regions where the energy functional has different topological properties. In particular, they proved that, for suitable constants $C_2(N)>C_1(N)>0$ depending only on $N$, and for a certain $\lambda_0^*=\lambda_0^*(a,b)$, the energy has: one global minimizer for $a^\frac{N-4}{2}b> C_1(N)$ and for $\lambda>\lambda_0^*$; one global minimizer for $a^\frac{N-4}{2}b= C_1(N)$ and $\lambda>0$; one local minimizer for $a^\frac{N-4}{2}b>C_1(N)$ and $\lambda$ in a left neighborhood of $\lambda_0^*$; a critical point of mountain pass type when $a^\frac{N-4}{2}b> C_2(N)$ and $\lambda$ again in a left neighborhood of $\lambda_0^*$, or when $a^\frac{N-4}{2}b=C_2(N)$ and $\lambda$ is large (cf. \cite[Theorems 1.1, 1.2, 1.3]{farsil2021on}, as well as Theorems 1.5, 1.6, 1.7). The techniques used in this paper are peculiar of higher dimensions ($N>4$) and do not apply to the "limiting" case $N=4$.

The dimension four, indeed, displays one more difficulty due to the appearance of a new phenomenon: the critical Sobolev exponent $2^*=2N/(N-2)$ in this case is four and this produces an interaction between the higher order Kirchhoff term $\left( \int_\Omega |\nabla u|^2 dx\right)^2 $ and the critical nonlinearity $\int_\Omega u^4 dx$. In this direction we mention the recent contributions \cite{f} and \cite{naishi2020existence}, which motivated our study.

The subject of \cite{f} is a critical problem set in dimension $N\geq 3$ with a general Kirchhoff function $M$ and a subcritical term $f$, which encode, as particular cases, those of problem \eqref{problem}, i.e., $M(t)=a+bt$ and $f(t)=t^{2^*-1}$. By using truncation arguments, the author proved the existence of one positive mountain pass solution for any $\lambda>0$ sufficiently large. The solutions are moreover shown to bifurcate from the zero one as $\lambda\to +\infty$. Similar results had been earlier obtained in \cite{alvcorma2005positive}, still by truncation techniques, but using completely different estimates.

In \cite{naishi2020existence}, instead, the authors dealt with  a problem with the same structure as \eqref{problem} (with $a=1$) via the Nehari manifold method and the concentration-compactness analysis for Palais-Smale sequences. Their work improves earlier results obtained in
\cite{nai2014the} by removing some technical assumptions on the parameter $\lambda$ and the domain $\Omega$. The paper addresses separately the linear case $p=2$ and the nonlinear one $p\in(2,4)$. Concerning the latter, the main existence result is the following one:

\begin{theorem}[Theorem 1.4 of \cite{naishi2020existence}]\label{thmnaishi}
	Let $p\in(2,4)$ and let
	\begin{equation}\label{defS-2}
		S:=\inf\left\lbrace \int_\Omega |\nabla u|^2 dx: u\in \W, \int_\Omega u^4 dx=1 \right\rbrace.
	\end{equation}
	Then,
	\begin{itemize}
		\item[$(i)$] there exists $b_*=b_*(p)\in(0,S^{-2})$ such that \eqref{problem} has at least one solution for all $b\in(0,b_*)$ and for all $\lambda>0$;
		\item[$(ii)$] for all $b\in(0,S^{-2})$ there exists $\lambda_*>0$ such that \eqref{problem} has at least one solution for every $\lambda\in(0,\lambda_*)$.
	\end{itemize}
\end{theorem}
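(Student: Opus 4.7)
The plan is to realize the solution as a mountain-pass critical point of the energy functional
\[
I_\lambda(u) \st \frac{a}{2}\int_\Omega |\nabla u|^2\,dx + \frac{b}{4}\Big(\int_\Omega |\nabla u|^2\,dx\Big)^2 - \frac{1}{4}\int_\Omega (u^+)^4\,dx - \frac{\lambda}{p}\int_\Omega (u^+)^p\,dx
\]
on $\W$. Truncation by the positive part together with the strong maximum principle will ensure that any nontrivial critical point is positive. Since $p>2$, Sobolev inequalities give a strict local minimum at the origin; since $b<S^{-2}$, picking any $v\in\W$ close to an Aubin--Talenti extremal of $S$ ensures $b\|\nabla v\|_2^4<\|v\|_4^4$, so $I_\lambda(tv)\to-\infty$ as $t\to+\infty$. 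Hence $I_\lambda$ has mountain-pass geometry, with some positive level $c_\lambda$.

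Next I would establish a Palais--Smale condition below the explicit threshold $c^*\st\tfrac{a^2 S^2}{4(1-bS^2)}$. Boundedness of PS sequences follows from the identity
\[
I_\lambda(u_n)-\tfrac14\langle I_\lambda'(u_n),u_n\rangle = \tfrac{a}{4}\|\nabla u_n\|_2^2 + \lambda\Big(\tfrac14-\tfrac1p\Big)\|u_n\|_p^p
\]
together with Sobolev interpolation controlling the subcritical term. Extracting a weakly convergent subsequence $u_n\rightharpoonup u$ with $u$ a weak solution and setting $v_n=u_n-u$, Brezis--Lieb gives $\|u_n\|_4^4=\|u\|_4^4+\|v_n\|_4^4+o(1)$ and an analogous identity for $\|\nabla u_n\|_2^2$. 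Writing $A^2\st\lim\|\nabla v_n\|_2^2$ and $B^4\st\lim\|v_n\|_4^4$, the critical Sobolev inequality forces $A^2\geq S B^2$, while passing to the limit in $\langle I_\lambda'(u_n),v_n\rangle\to 0$ produces the Kirchhoff identity $(a+b\|\nabla u\|_2^2+bA^2)A^2=B^4$. Combined, these yield
\[
\lim I_\lambda(u_n) - I_\lambda(u) = \tfrac{A^2}{4}\bigl(a+b\|\nabla u\|_2^2\bigr) \geq \frac{(a+b\|\nabla u\|_2^2)^2 S^2}{4(1-bS^2)} \geq c^*
\]
whenever $v_n\not\to 0$, so PS holds for every level in $(0,c^*)$; the condition $b<S^{-2}$ is precisely what makes $c^*$ finite.

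It then remains to show $c_\lambda<c^*$. I would test along $t\mapsto tu_\epsilon$, where $u_\epsilon$ is the standard truncated Aubin--Talenti bubble concentrating at an interior point of $\Omega$. The four-dimensional estimates
\[
\|\nabla u_\epsilon\|_2^2=K+O(\epsilon^2),\qquad \|u_\epsilon\|_4^4=L+O(\epsilon^4),\qquad \|u_\epsilon\|_p^p = D_p\,\epsilon^{4-p}(1+o(1)),
\]
with $K^2=S^2 L$, $D_p>0$ and $p\in(2,4)$, together with an explicit maximization in $t$, lead to an expansion of the form
\[
\max_{t\geq 0} I_\lambda(tu_\epsilon) = c^* + C_1(a,b)\,\epsilon^2 - C_2(a,b,p)\,\lambda\,\epsilon^{4-p} + o(\epsilon^{4-p}).
\]
Since $4-p<2$, the subcritical gain dominates the critical error for $\epsilon$ small enough, so $c_\lambda<c^*$, and the PS property then supplies the desired critical point.

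The main obstacle, and the origin of the restrictions in (i) and (ii), is the dependence of $C_1,C_2$ on $(1-bS^2)^{-1}$: the Kirchhoff--critical interaction becomes singular as $b\to S^{-2}$, and the smallness of $\epsilon$ needed to close the estimate must be calibrated accordingly. In (i), with $\lambda>0$ arbitrary, one restricts $b\in(0,b_*(p))$: this keeps $C_1/C_2$ uniformly bounded and allows a choice of $\epsilon$ independent of $\lambda$. In (ii), once $b\in(0,S^{-2})$ is fixed, $C_1$ and $C_2$ are fixed and one tunes $\epsilon=\epsilon(\lambda)\to 0$ as $\lambda\to 0^+$ so that $\lambda\epsilon^{4-p}\gg \epsilon^2$; the requirement that this balance be compatible with the Nehari-manifold framework used in \cite{naishi2020existence} is precisely what forces the upper bound $\lambda<\lambda_*(b)$.
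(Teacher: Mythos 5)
This statement is quoted from Naimen--Shibata \cite{naishi2020existence} and is not reproved in the paper; both the cited proof and the paper's own refinements (Theorems \ref{minim5}, \ref{minim5bis}) work on constrained subsets of a Nehari manifold precisely because the free mountain-pass scheme you propose does not close in dimension four. Your proposal contains two genuine gaps. First, boundedness of Palais--Smale sequences does not follow from the identity you invoke: it gives
\begin{equation*}
\frac{a}{4}\|\nabla u_n\|_2^2 \;\le\; c+o(1)\,\|u_n\|+\lambda\,\frac{4-p}{4p}\,\|u_n\|_p^p \;\le\; c+o(1)\,\|u_n\|+\lambda\,\frac{4-p}{4p}\,c_p^p\,\|u_n\|^p ,
\end{equation*}
and since $p>2$ the right-hand side dominates the left for large $\|u_n\|$, so no bound results; the alternative combination $I_\lambda-\tfrac1p\langle I_\lambda'(u_n),u_n\rangle$ fails too, because for $p<4$ it produces the term $b(\tfrac14-\tfrac1p)\|\nabla u_n\|_2^4$ with a negative coefficient. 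This competition between the two quartic terms is exactly the four-dimensional obstruction, and it is why the paper bounds its minimizing sequences through the constraint $\mathcal{C}_{n,R}$ and the inequality $\|u_n\|\le R$ of Lemma \ref{minim2}, not through a free PS identity.

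Second, your compactness step is incomplete. The weak limit $u$ of a PS sequence solves the equation with coefficient $a+b\ell^2$, where $\ell^2=\|\nabla u\|_2^2+A^2$, not $a+b\|\nabla u\|_2^2$; it is therefore not a critical point of $I_\lambda$, and $I_\lambda(u)\ge 0$ is no longer available. Computing the level directly one finds
\begin{equation*}
c=\lim_n I_\lambda(u_n)=\frac{a}{4}\,\ell^2-\lambda\,\frac{4-p}{4p}\,\|u\|_p^p ,
\end{equation*}
so even when $A>0$ forces $\tfrac{a}{4}\ell^2\ge c^*$, the negative $\lambda$-term can push $c$ below $c^*$, and the implication ``$c<c^*\Rightarrow A=0$'' is not established. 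This is precisely where the hypotheses of the theorem enter: in the paper's proof of Theorem \ref{minim5} the analogous term $\lambda\frac{4-p}{ap}\|u_*\|_p^p$ appears in \eqref{ineq10} and is absorbed only by combining the Nehari/$K_{\lambda,n}\le 0$ constraints with the restriction $b\le\frac{(p-2)^2}{4}S^{-2}$ (or $\lambda\le\Lambda_p(a,b)$). Your closing paragraph instead attributes the restrictions on $b$ and $\lambda$ to a calibration of $\varepsilon$ in the test-function estimate; but there the $\lambda$-term only helps (it lowers $\max_t I_\lambda(tu_\varepsilon)$, cf.\ \eqref{ep}), so large $\lambda$ is never an obstacle at that stage and no such calibration is needed. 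The energy estimate $c_\lambda<c^*$ you sketch is correct and matches Lemma \ref{mp1}; the missing ideas are a mechanism for boundedness and a replacement for $I_\lambda(u)\ge 0$ in the concentration analysis.
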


As already said, the conclusion is achieved by constructing Palais-Smale sequences on suitable submanifolds of the Nehari manifold which are shown to be bounded and compact.

Inspired by the previous papers, here we improve and complement some results contained there by using a completely different technique: we focus on an auxiliary approximating problem, show that the latter has solutions by variational arguments, and finally prove that these solutions converge to the ones of the original problem. Our approach requires to study separately the cases $b>S^{-2}$ and $b\in(0,S^{-2})$, and this is done in the following Sections \ref{sec:b>S-2} and \ref{sec:b<S-2}, respectively.

In the first case, we prove that the energy corresponding to \eqref{problem} has a global minimizer for $\lambda>0$ sufficiently large (Theorem \ref{existenceglobalmin}). As recalled before, such solution is added to the other one derived in \cite{f}, still for $\lambda$ large enough, by a mountain pass type approach. As a result, problem \eqref{problem} turns out to have at least two different solutions for $\lambda$ large enough and for $b\in(S^{-2},+\infty)$.

The case $0<b<S^{-2}$ is more delicate to handle. We introduce first some functionals approximating the critical Kirchhoff term, and work on suitable subsets of the approximating Nehari manifold. By making also use of the concentration-compactness principle, we prove the existence of one solution to \eqref{problem} for small values of $b$ (Theorem \ref{minim5}) and for small values of $\lambda$ (Theorem \ref{minim5bis}).
We stress out, that, unlike Theorem \ref{thmnaishi} in which the thresholds $b_*$ and $\lambda_*$ for which one has solutions are not specified, our approach permits to get explicit estimates of such constants. More specifically, in Theorem \ref{minim5} we show the existence of one solution to \eqref{problem} for $\displaystyle b\in\left(0,\frac{(p-2)^2}{4}S^{-2}\right)$ and for any $\lambda\in(0,+\infty)$; in Theorem \ref{minim5bis}, by using the same procedure, we obtain solutions for any $b\in(0,S^{-2})$ and for $\lambda\in \left(0,\Lambda_p\right)$, where
$$
\Lambda_p:=\frac{2a^{\frac{4-p}{2}}}{c_p^p(4-p)} \left(\frac{p-2}{p}\right)^{\frac{p-2}{2}}(S^{-2}-b)^{\frac{p-2}{2}}
$$
(see below for the definition of the constant $c_p$). We point out that, as in Theorem \ref{thmnaishi}, our constants $b_*$ and $\lambda_*$ are not "symmetric"; $b_*$ is independent of $\lambda$ (but depends on $p$), while $\lambda_*$ is a function of $b$. Moreover, our results, in both cases $b>S^{-2}$ and $b<S^{-2}$ are independent of the constant $a$ which can range all over the positive reals (in \cite{naishi2020existence} it is instead constantly assumed that $a=1$).

\subsection{Notation}
We will denote by $I_\lambda$ the energy functional corresponding to \eqref{problem}, i.e. the functional $I_\lambda:\W\to\R$ defined by
\begin{equation}
	I_\lambda(u) := \frac{a}{2}\|u\|^2+\frac{b}{4}\|u\|^4-\frac{1}{4}\|u\|_4^4-\frac{\lambda}{p}\|u\|_p^p,
\end{equation}
for each $u\in \W$, where
\begin{align*}
	\|u\|_m & := \ \left(\int_\Omega |u|^mdx\right)^{\frac{1}{m}}, \quad \text{for each } m\in [1,4],\\
	\|u\| &:= \|\nabla u\|_2.
\end{align*}
Moreover, we set
$$
c_m := \sup_{u\in \W\setminus\{0\}}\frac{\|u\|_m}{\|u\|}, \quad \text{for each } m\in [1,4];
$$
in the light of \eqref{defS-2}, one has
$$
S = c_4^{-2}.
$$
By a weak solution to \eqref{problem} we mean any $u\in\W$ such that
$$
\left( a+b \int_\Omega |\nabla u|^2 dx\right)\int_\Omega \nabla u \nabla v dx -\int_\Omega u^3 v dx -\lambda\int_\Omega u^{p-1}v dx=0,
$$
for all $v\in\W$. Since our approach to \eqref{problem} relies upon approximating procedures, from now on we fix a sequence $\{\sigma_n\}\subset (0,4-p)$, with $\displaystyle\lim_{n\rightarrow +\infty}\sigma_n=0$, and define the approximating energy functional by
\begin{equation}
	I_{\lambda,n}(u):=\frac{a}{2}\|u\|^2+\frac{b}{4-\sigma_n}\|u\|^{4-\sigma_n}-\frac{1}{4-\sigma_n}\|u\|_{4-\sigma_n}^{4-\sigma_n}-\frac{\lambda}{p}\|u\|_p^p,
\end{equation}
for each $u\in \W$ and $n\in \N$.

\section{The case $b>S^{-2}$}\label{sec:b>S-2}
Our existence result for problem \eqref{problem} relies upon the following auxiliary ones.

\begin{lemma}\label{coer}
	There exists $n_0\in \N$ such that
	\begin{eqnarray}\label{cIn}
		\lim_{\|u\|\rightarrow +\infty}\inf_{n\geq n_0} I_{\lambda,n}(u)=+\infty.
	\end{eqnarray}
\end{lemma}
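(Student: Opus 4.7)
The plan is to bound the potentially dangerous term $\frac{1}{4-\sigma_n}\|u\|_{4-\sigma_n}^{4-\sigma_n}$ against the Kirchhoff term $\frac{b}{4-\sigma_n}\|u\|^{4-\sigma_n}$, exploiting the strict inequality $b>S^{-2}$, and then absorb the subcritical term using the fact that $4-\sigma_n>p$. Concretely, by Hölder's inequality with exponents $\tfrac{4}{4-\sigma_n}$ and $\tfrac{4}{\sigma_n}$, and by the definition of $c_4=S^{-1/2}$,
$$\|u\|_{4-\sigma_n}^{4-\sigma_n}\;\leq\;|\Omega|^{\sigma_n/4}\|u\|_4^{4-\sigma_n}\;\leq\;|\Omega|^{\sigma_n/4}S^{-(4-\sigma_n)/2}\|u\|^{4-\sigma_n}.$$
Hence
$$\frac{b}{4-\sigma_n}\|u\|^{4-\sigma_n}-\frac{1}{4-\sigma_n}\|u\|_{4-\sigma_n}^{4-\sigma_n}\;\geq\;\frac{b-|\Omega|^{\sigma_n/4}S^{-(4-\sigma_n)/2}}{4-\sigma_n}\,\|u\|^{4-\sigma_n}.$$

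Since $\sigma_n\to 0$, the bracketed quantity converges to $b-S^{-2}>0$, so we may fix $n_0\in\mathbb{N}$ and $\delta>0$ with $b-|\Omega|^{\sigma_n/4}S^{-(4-\sigma_n)/2}\geq \delta$ for all $n\geq n_0$. Enlarging $n_0$ if necessary, we can also assume $\sigma_n\leq \tfrac{4-p}{2}$, which gives $4-\sigma_n\geq q:=\tfrac{4+p}{2}>p$ for $n\geq n_0$. For $\|u\|\geq 1$ this implies $\|u\|^{4-\sigma_n}\geq \|u\|^{q}$, and combining with the Sobolev estimate $\|u\|_p\leq c_p\|u\|$ we get
$$I_{\lambda,n}(u)\;\geq\;\frac{a}{2}\|u\|^2+\frac{\delta}{4}\|u\|^{q}-\frac{\lambda c_p^p}{p}\|u\|^p$$
for all $n\geq n_0$. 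The right-hand side is independent of $n$ and, since $q>p$, tends to $+\infty$ as $\|u\|\to +\infty$. This yields \eqref{cIn}.

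The main technical point — really the only one — is the interaction between $b\|u\|^{4-\sigma_n}$ and $\|u\|_{4-\sigma_n}^{4-\sigma_n}$, which have identical growth in $\|u\|$. The argument hinges entirely on the strict inequality $b>S^{-2}$ together with the continuity in $\sigma_n$ of the optimal Sobolev constant associated with the approximating exponent, which is exactly what allows the Kirchhoff term to dominate uniformly for $n$ large. Once that is settled, the subcritical $\|u\|_p^p$ is routine because its exponent lies strictly below $q$.
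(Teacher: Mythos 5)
Your proof is correct and follows essentially the same route as the paper: both bound $\|u\|_{4-\sigma_n}^{4-\sigma_n}$ via H\"older's inequality and the Sobolev constant $S$, use $b>S^{-2}$ together with $\sigma_n\to 0$ to make the coefficient of $\|u\|^{4-\sigma_n}$ uniformly positive for $n\geq n_0$, and then dominate the $\lambda\|u\|^p_p$ term thanks to the intermediate exponent $\tfrac{4+p}{2}>p$. The only cosmetic difference is that you restrict to $\|u\|\geq 1$ where the paper instead writes $\|u\|^{4-\sigma_n}\geq \|u\|^{2+\frac{p}{2}}-1$.
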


\begin{proof}
\;	Let $n\in \N$ and $u\in \W$. Then,
	\begin{align*}
		I_{\lambda,n}(u)& \geq \frac{b}{4-\sigma_n}\|u\|^{4-\sigma_n} -
		\frac{|\Omega|^{\frac{\sigma_n}{4}}}{4-\sigma_n}\|u\|_4^{4-\sigma_n}-\frac{\lambda c_p^{p}}{p}\|u\|^p \\
		&\geq \left(\frac{b}{4-\sigma_n}-\frac{S^{-\frac{4-\sigma_n}{2}}|\Omega|^{\frac{\sigma_n}{4}}}{4-\sigma_n}\right)\|u\|^{4-\sigma_n}-\frac{\lambda
			c_p^{p}}{p}\|u\|^p.
	\end{align*}
	Since $b>S^{-2}$ and $\sigma_n\rightarrow 0$ as $n\to +\infty$, one has
	\begin{equation*}
		\lim_{n\rightarrow +\infty}
		\left(\frac{b}{4-\sigma_n}-\frac{S^{-\frac{4-\sigma_n}{2}}|\Omega|^{\frac{\sigma_n}{4}}}{4-\sigma_n}\right)=\frac{b-S^{-2}}{4}>0.
	\end{equation*}
	Moreover, recalling that $2<p<4$, we have
	\begin{eqnarray*}
		\lim_{n\rightarrow +\infty}(4-\sigma_n)=4>2+\frac{p}{2}>p,
	\end{eqnarray*}
	and therefore there exists $n_0\in \N$ such that
	\begin{align*}
		I_{\lambda,n}(u)&\geq \frac{b-S^{-2}}{8}\|u\|^{4-\sigma_n}-\frac{\lambda c_p^{p}}{p}\|u\|^p \geq \frac{b-S^{-2}}{8}(\|u\|^{2+\frac{p}{2}}-1)-\frac{\lambda c_p^{p}}{p}\|u\|^p,
	\end{align*}
	for each $n\in\N$ with $n\geq n_0$, from which \eqref{cIn} follows.
\end{proof}

\medskip

Without loss of generality, we may assume that $n_0=1$ in \eqref{cIn}.

\begin{lemma}\label{Ininf}
	There exists $\lambda_0>0$ such that
	\begin{equation}
		\sup_{n\in \N}\inf_{u\in \W}I_{\lambda,n}(u)<0, \quad\text{for  each } \lambda>\lambda_0.
	\end{equation}
\end{lemma}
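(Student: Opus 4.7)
The strategy is to exhibit a single test function $u_0 \in \W$ (independent of $n$ and $\lambda$) along which $I_{\lambda,n}(u_0)$ is bounded above, uniformly in $n$, by an expression of the form $M - (\lambda/p)\|u_0\|_p^p$ with $M<\infty$ not depending on $n$. Since $\inf_{u\in\W} I_{\lambda,n}(u) \le I_{\lambda,n}(u_0)$ for each $n$, this will immediately yield the conclusion once $\lambda$ is chosen large enough to make the right-hand side strictly negative.

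Concretely, I would fix any nonzero $u_0\in\W$ (for instance a first Dirichlet eigenfunction, so that $\|u_0\|_p>0$) and write out
$$
I_{\lambda,n}(u_0) = \frac{a}{2}\|u_0\|^2 + \frac{b}{4-\sigma_n}\|u_0\|^{4-\sigma_n} - \frac{1}{4-\sigma_n}\|u_0\|_{4-\sigma_n}^{4-\sigma_n} - \frac{\lambda}{p}\|u_0\|_p^p.
$$
The three positive quantities must be bounded uniformly in $n$. Since $\sigma_n\in(0,4-p)$, one has $4-\sigma_n\in(p,4)$, so $\frac{1}{4-\sigma_n}\le\frac{1}{p}$, $\|u_0\|^{4-\sigma_n}\le\max(\|u_0\|^p,\|u_0\|^4)$, and by Hölder's inequality $\|u_0\|_{4-\sigma_n}^{4-\sigma_n}\le|\Omega|^{\sigma_n/4}\|u_0\|_4^{4-\sigma_n}$, which is also bounded as $\sigma_n$ varies in a compact subset of $[0,4-p]$. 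Hence there exists a constant $M=M(u_0,a,b,\Omega)>0$, independent of $n$, such that
$$
I_{\lambda,n}(u_0)\le M-\frac{\lambda}{p}\|u_0\|_p^p \qquad \text{for all } n\in\N.
$$

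Finally, I would set $\lambda_0:=\frac{p(M+1)}{\|u_0\|_p^p}$; then, for every $\lambda>\lambda_0$ and every $n\in\N$, $I_{\lambda,n}(u_0)<-1$, so that $\inf_{u\in\W}I_{\lambda,n}(u)\le -1$, and taking the supremum over $n$ gives the claim. The argument is essentially routine: the only point requiring attention is the uniform-in-$n$ control of the two terms carrying the exponent $4-\sigma_n$, and this is straightforward because $\sigma_n$ stays bounded away from the critical value corresponding to a loss of compactness. I therefore do not anticipate any real obstacle.
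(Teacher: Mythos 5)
Your proposal is correct and follows essentially the same route as the paper: fix a single nonzero test function, bound the two $n$-dependent positive terms uniformly in $n$ using $4-\sigma_n\in(p,4)$ (the paper simplifies this by normalizing $\|u_0\|=1$), discard or control the nonpositive term $-\frac{1}{4-\sigma_n}\|u_0\|_{4-\sigma_n}^{4-\sigma_n}$, and then take $\lambda$ large enough that the $-\frac{\lambda}{p}\|u_0\|_p^p$ term dominates. The only cosmetic difference is that your Hölder bound on the $L^{4-\sigma_n}$ term is unnecessary for an upper estimate, since that term enters with a negative sign and can simply be dropped.
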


\begin{proof}
\;	Fix $u_0\in \W\setminus \{0\}$, with $\|u_0\|=1$. Recalling that $0<\sigma_n<4-p$, one has
	\begin{equation*}
		I_{\lambda,n}(u_0) \leq \frac{a}{2}+\frac{b}{p}-\frac{\lambda}{p}\|u_0\|_p^p, \quad \text{for each } n\in\N.
	\end{equation*}
	Therefore,
	\begin{equation*}
		\sup_{n\in \N}\inf_{u\in \W}I_{\lambda,n}(u)\leq\frac{a}{2}+\frac{b}{p}-\frac{\lambda}{p}\|u_0\|_p^p<0,
	\end{equation*}
	for $\displaystyle\lambda>\lambda_0:=\frac{p}{\|u_0\|_p^p}\left(\frac{a}{2}+\frac{b}{p}\right)$.
\end{proof}

\medskip

\begin{theorem}\label{existenceglobalmin}
	Let $\lambda_0$ be as in Lemma \ref{Ininf}. Then, for each $\lambda>\lambda_0$ problem \eqref{problem} has at least one solution, which is a global minimizer of $I_\lambda$.
\end{theorem}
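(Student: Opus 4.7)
I would follow the approximation program suggested by the preceding lemmas. First I would show that each subcritical functional $I_{\lambda,n}$ attains a nonnegative global minimizer $u_n\in\W$. Coercivity holds uniformly in $n$ by Lemma \ref{coer}, while weak sequential lower semicontinuity is standard in the subcritical regime: both $p$ and $4-\sigma_n$ lie strictly below the critical exponent $4$, so Rellich--Kondrachov makes the $L^p$ and $L^{4-\sigma_n}$ terms weakly continuous, and the $\W$-norm terms are weakly lsc. Since $I_{\lambda,n}(|u|)=I_{\lambda,n}(u)$, I may take $u_n\geq 0$; Lemma \ref{Ininf} gives $I_{\lambda,n}(u_n)\leq -\varepsilon$ uniformly in $n$ for some $\varepsilon>0$, so $u_n\not\equiv 0$. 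Being a critical point of a subcritical $C^1$ functional, $u_n$ solves the associated Euler--Lagrange equation; classical elliptic regularity and the strong maximum principle then yield $u_n>0$ in $\Omega$.

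The uniform upper bound $I_{\lambda,n}(u_n)\leq 0$ combined with Lemma \ref{coer} forces $\{u_n\}$ to be bounded in $\W$, so up to a subsequence $u_n\rightharpoonup u^*$ weakly in $\W$, strongly in $L^q(\Omega)$ for every $q\in[1,4)$, and a.e.\ in $\Omega$. My goal is then to identify $u^*$ as a global minimizer and a weak solution of \eqref{problem}.

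The crux, and the main obstacle, is to establish the chain of inequalities
\[
I_\lambda(u^*)\;\leq\;\liminf_{n\to\infty}I_{\lambda,n}(u_n)\;\leq\;\limsup_{n\to\infty}I_{\lambda,n}(u_n)\;\leq\;\inf_{v\in\W}I_\lambda(v).
\]
The rightmost inequality follows from the minimality $I_{\lambda,n}(u_n)\leq I_{\lambda,n}(v)$ for every $v\in\W$, combined with the pointwise convergence $I_{\lambda,n}(v)\to I_\lambda(v)$, obtained via dominated convergence (using $|v|^{4-\sigma_n}\leq |v|^4+1$). For the leftmost inequality --- this is where the hypothesis $b>S^{-2}$ plays its essential role --- I would write $u_n=u^*+w_n$ with $w_n\rightharpoonup 0$, apply a Brezis--Lieb type splitting to the terms $\|u_n\|_{4-\sigma_n}^{4-\sigma_n}$ and $\|u_n\|^{4-\sigma_n}$ (absorbing the $\sigma_n\neq 0$ discrepancy into an $o(1)$ through the boundedness of $\|u_n\|$), and invoke the Sobolev inequality $\|w_n\|_4^4\leq S^{-2}\|w_n\|^4$ to deduce
\[
I_{\lambda,n}(u_n)\;\geq\;I_\lambda(u^*)+\frac{a}{2}\|w_n\|^2+\frac{b-S^{-2}}{4}\|w_n\|^4+o(1).
\]
Both correction terms being nonnegative forces equality throughout the chain, which at once identifies $u^*$ as a global minimizer of $I_\lambda$ and, because the $\frac{a}{2}\|w_n\|^2$ term must vanish in the limit, also yields the strong convergence $u_n\to u^*$ in $\W$. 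Strong convergence then permits passage to the limit in the Euler--Lagrange equation for $u_n$, showing that $u^*$ is a weak solution of \eqref{problem}.

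Finally, $u^*\neq 0$ since $I_\lambda(u^*)\leq \limsup_n I_{\lambda,n}(u_n)\leq -\varepsilon<0=I_\lambda(0)$, while $u^*\geq 0$ follows from the a.e.\ limit. The strong maximum principle applied to the limiting equation $-(a+b\|u^*\|^2)\Delta u^*=(u^*)^3+\lambda (u^*)^{p-1}$ then forces $u^*>0$ in $\Omega$, completing the argument.
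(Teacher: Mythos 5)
Your argument is correct, and it shares the paper's overall skeleton (minimize each subcritical $I_{\lambda,n}$, extract a bounded sequence of minimizers, pass to the limit), but the decisive compactness step is handled by a genuinely different mechanism. The paper never performs a Brezis--Lieb decomposition: it passes to the limit in the Euler--Lagrange equation for $u_n$ to obtain the limiting identity with coefficient $a+bl^2$ (where $l=\lim\|u_n\|$), tests it with the weak limit $u_\lambda$, and then compares $I_{\lambda,n}(u_n)\le I_{\lambda,n}(u_\lambda)$ using the Nehari-type identity $I'_{\lambda,n}(u_n)(u_n)=0$; an algebraic manipulation of the resulting inequality yields $\|u_\lambda\|\ge l$ and hence strong convergence, without invoking $b>S^{-2}$ beyond the coercivity of Lemma \ref{coer}. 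You instead split $u_n=u^*+w_n$ and use the Sobolev inequality $\|w_n\|_4^4\le S^{-2}\|w_n\|^4$ to produce the remainder $\frac a2\|w_n\|^2+\frac{b-S^{-2}}{4}\|w_n\|^4$, so the hypothesis $b>S^{-2}$ is used a second time to kill the concentrating part. Both routes are valid; yours is the more standard concentration-compactness-flavoured argument and makes transparent exactly where criticality would otherwise cause trouble, while the paper's energy-comparison trick is shorter and avoids the (mild but real) bookkeeping needed to convert $\|u_n\|_{4-\sigma_n}^{4-\sigma_n}$ and $\|u_n\|^{4-\sigma_n}$ into their exponent-$4$ counterparts up to $o(1)$ before Brezis--Lieb applies --- a step you correctly flag but should carry out with care (it works because $\|u_n\|$ and $\|u_n\|_4$ range over a bounded set, so $t^{4-\sigma_n}-t^4\to0$ uniformly there).
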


\begin{proof}
\; Fix $\lambda>\lambda_0$ and $n\in \N$. From Lemmas \ref{coer} and \ref{Ininf} we infer that $I_{\lambda,n}$ admits a global minimum point $u_n\in \W\setminus\{0\}$,  which we may assume non-negative in $\Omega$. Therefore, by the Strong Maximum Principle, $u_n>0$ in $\Omega$.
	
	By means again of Lemma \ref{coer} and \ref{Ininf} it is easy to check that the sequence $\{u_n\}$ is bounded in $\W$. Hence, there exist $l\in [0,+\infty)$ and a non-negative $u_\lambda \in \W$ such that, up to a subsequence,
	\begin{equation}\label{relationunul}
		\begin{split}
			& u_n\rightharpoonup u_\lambda \quad \text {in }\W;\\
			& \|u_n\|\rightarrow l;\\
			& u_n\rightarrow  u_\lambda \quad \text{in } L^m(\Omega) \text{ for each } m\in [1,4);\\
			& \text{ for each } m\in [1,4) \text{ there exists } g\in L^1(\Omega) \text{ such that } |u_n|^m\leq g \text{ a.e. in } \Omega;\\
			& u_n\rightarrow u_\lambda \text{ a.e. in }\Omega,
		\end{split}
	\end{equation}
	as $n\to +\infty$. As a result, by the dominated convergence theorem,
	\begin{align}
		\int_\Omega u_n(x)^{3-\sigma_n}\varphi(x)dx  & \rightarrow \int_\Omega u_\lambda(x)^3\varphi(x)dx,\label{lim1}\\
		\int_\Omega u_n(x)^{p-1}\varphi(x)dx & \rightarrow \int_\Omega u_\lambda(x)^{p-1}\varphi(x)dx,\label{lim2}
	\end{align}
	as $n\to+\infty$, for each $\varphi\in C_0^1(\overline{\Omega})$. Moreover, being $I_{\lambda,n}'(u_n)=0$, one has
	\begin{eqnarray*}
		&0&=I{'}_{\lambda,n}(u_n)(\varphi)\\
		& &=(a+b\|u_n\|^{2-\sigma_n})\int_\Omega \nabla u_n(x)\nabla \varphi(x)dx-\int_\Omega\left(u_n(x)^{3-\sigma_n}+\lambda
		u_n(x)^{p-1}\right)\varphi(x)dx;
	\end{eqnarray*}
	passing to the limit as $n\to +\infty$ and taking \eqref{lim1} and \eqref{lim2} into account, we get
	\begin{equation}\label{critical}
		0=(a+bl^2)\int_\Omega \nabla u_\lambda(x)\nabla \varphi(x)dx-\int_\Omega\left(u_\lambda(x)^{3}+\lambda u_\lambda(x)^{p-1}\right)\varphi(x)dx.
	\end{equation}
	Since $C_0^1(\overline{\Omega})$ is dense in $\W$, the previous inequality actually holds for each \linebreak  $\varphi \in \W$. In particular, by
	choosing $\varphi=u_\lambda$ one infers that
	\begin{equation*}
		0=(a+bl^2)\|u_\lambda\|^2-\|u_\lambda\|_4^4-\lambda \|u_\lambda\|_p^p,
	\end{equation*}
	from which
	\begin{eqnarray}\label{ul}
		\|u_\lambda\|_4^4=(a+bl^2)\|u_\lambda\|^2-\lambda\|u_\lambda\|_p^p.
	\end{eqnarray}
	Now, let $n\in \N$. Using \eqref{ul}, the identity
	\begin{equation*}
		\|u_n\|_{4-\sigma_n}^{4-\sigma_n}=\|u_n\|_{4-\sigma_n}^{4-\sigma_n}+I_{\lambda,n}'(u_n)(u_n)=a\|u_n\|^2+b\|u_n\|^{4-\sigma_n}-\lambda \|u_n\|_p^p,
	\end{equation*}
	and the fact that $u_n$ is a global minimizer for $I_{\lambda,n}$, we deduce
	\begin{align*}
		I_{\lambda,n}(u_n) & =a\left(\frac{1}{2}-\frac{1}{4-\sigma_n}\right)\|u_n\|^2-\lambda\left(\frac{1}{p}-\frac{1}{4-\sigma_n}\right)\|u_n\|_p^p\\
		&\leq
		\frac{a}{2}\|u_\lambda\|^2+\frac{b}{4-\sigma_n}\|u_\lambda\|^{4-\sigma_n}-\frac{1}{4-\sigma_n}\|u_\lambda\|_{4-\sigma_n}^{4-\sigma_n}-\frac{\lambda}{p}\|u_\lambda\|_p^p\\
		& = I_{\lambda,n}(u_\lambda).
	\end{align*}
	Taking the limit as $n\to+\infty$ and noticing that
	$$
	u_\lambda^{4-\sigma_n}\leq 1+u_\lambda^4\in L^1(\Omega), \quad u_n^{4-\sigma_n}\rightarrow u_\lambda^4 \quad\text{a.e. in } \Omega \text{ as } n\to+\infty,
	$$
	one gets
	\begin{equation*}
		\frac{a}{4}l^2-\lambda\left(\frac{1}{p}-\frac{1}{4}\right)\|u_\lambda\|_p^p\leq
		\frac{a}{2}\|u_\lambda\|^2+\frac{b}{4}\|u_\lambda\|^4-\frac{1}{4}\|u_\lambda\|_4^4-\frac{\lambda}{p}\|u_\lambda\|_p^p.
	\end{equation*}
	Moreover, by \eqref{ul} it follows that
	\begin{equation*}
		\frac{a}{4}l^2-\lambda\left(\frac{1}{p}-\frac{1}{4}\right)\|u_\lambda\|_p^p\leq
		\frac{a}{4}\|u_\lambda\|^2+\frac{b}{4}\left(\|u_\lambda\|^2-l^2\right)\|u_\lambda\|^2-\lambda\left(\frac{1}{p}-\frac{1}{4}\right)\|u_\lambda\|_p^p,
	\end{equation*}
	from which
	\begin{equation*}
		\frac{a}{4}l^2\leq \frac{a}{4}\|u_\lambda\|^2+\frac{b}{4}(\|u_\lambda\|^2-l^2)\|u_\lambda\|^2.
	\end{equation*}
	Then, it must be
	\begin{equation*}
		\|u_\lambda\|\geq l=\lim_{n\rightarrow + \infty}\|u_n\|.
	\end{equation*}
	By the first convergence in \eqref{relationunul}, the previous inequality implies that $u_n\rightarrow u_\lambda$ in $\W$ and, in particular, that
	\begin{equation*}
		l=\lim_{n\rightarrow \infty}\|u_n\|=\|u_\lambda\| \quad\text{and}\quad \lim_{n\rightarrow \infty}\|u_n\|_{4-\sigma_n}=\|u_\lambda\|_4.
	\end{equation*}
	Hence, by \eqref{critical} we obtain that $u_\lambda$ is a critical point of  $I_\lambda$. Since $\lambda>\lambda_0$, we also have
	\begin{equation*}
		I_\lambda(u_\lambda)=\lim_{n\rightarrow +\infty}I_{\lambda,n}(u_n)<0,
	\end{equation*}
	which means that $u_\lambda \neq 0$. In particular, $u_\lambda>0$ by the Strong Maximum Principle.  Finally, note that
	\begin{equation*}
		I_\lambda(u_\lambda)=\lim_{n\rightarrow +\infty}I_{\lambda,n}(u_n)\leq \lim_{n\rightarrow +\infty}I_{\lambda,n}(u)=I_\lambda(u), \quad \text{for all } u\in \W,
	\end{equation*}
	that is, $u_\lambda$ is a global minimizer for $I_\lambda$.
\end{proof}

\begin{remark}
	{\rm As anticipated in the Introduction, the existence of a solution for a problem more general than \eqref{problem} was recently obtained in \cite{f} still for large values of $\lambda$. Indeed, it is straightforward to verify that the functions
		$$
		M(t):= a+bt, \;\; t\geq 0, \quad\text{and}\quad f(t):= \max\{0,t\}^{p-1}, \;\; t\in\R,
		$$
		satisfy the set of assumptions $(M_1)-(M_2)$ and $(f_1)-(f_2)$ of \cite[Theorem 1.1]{f}. As a result, there exists $\lambda_1>0$ such that $I_\lambda$ has a mountain pass solution $v_\lambda\in \W$ for any $\lambda\geq\lambda_1$.
		Then, in the light of Theorem \ref{existenceglobalmin}, we can state that for $\lambda>\lambda^*:=\max\{\lambda_0,\lambda_1\}$, problem \eqref{problem} has at least two different solutions.
	}
\end{remark}

\section{The case $b<S^{-2}$}\label{sec:b<S-2}

To study problem \eqref{problem} in this setting, we introduce suitable approximating functionals and, associated with them, suitable subsets of the Nehari manifold in which we are going to work.

For each $n\in \N$, let $\Psi_{\lambda,n} J_{\lambda,n},K_{\lambda,n}:\W\rightarrow \R$ be the functionals defined by
\begin{equation}\label{funzpsijk}
	\begin{split}
		\Psi_{\lambda,n}(u)& :=\|u\|_{4-\sigma_n}^{4-\sigma_n}+\lambda \|u\|_p^p,\\
		J_{\lambda,n}(u) &:=\frac{a}{2}f(\Psi_{\lambda,n}(u))^2+\frac{b}{4-\sigma_n}f(\Psi_{\lambda,n}(u))^{4-\sigma_n}-\frac{1}{4-\sigma_n}\|u\|_{4-\sigma_n}^{4-\sigma_n}-\frac{\lambda}{p}\|u\|_p^p,\\
		K_{\lambda,n}(u) &:= 2af(\Psi_{\lambda,n}(u))^2+b(4-\sigma_n)f(\Psi_{\lambda,n}(u))^{4-\sigma_n}-(4-\sigma_n)\|u\|_{4-\sigma_n}^{4-\sigma_n}-\lambda
		p\|u\|_p^p,
	\end{split}
\end{equation}
for each $u\in \W$, where $f:(0,+\infty)\rightarrow (0,+\infty)$ is the inverse function of
\begin{equation*}
	(0,+\infty)\ni x\mapsto ax^2+bx^{4-\sigma_n}.
\end{equation*}
Notice that $f\in C^\infty(0,+\infty)$ and one has
\begin{equation*}
	f'(y)=\frac{1}{2af(y)+(4-\sigma_n)bf(y)^{3-\sigma_n}}, \quad \text{ for all } y\in (0,+\infty).
\end{equation*}
It is easy to see that the functions $y\mapsto f(y)^2$ and $y\mapsto f(y)^{4-\sigma_n}$ are $C^1$ in $[0,+\infty)$ and, consequently, being $\Psi_{\lambda,n}\in C^1(\W)$, the functionals $J_{\lambda,n}$ and $K_{\lambda,n}$ are $C^1$ in $\W$ as well, with derivatives given by
\begin{equation}\label{Jder}
	\begin{split}
		J_{\lambda,n}'(u)(\varphi)  &= \left(af(\Psi_{\lambda,n}(u))+bf(\Psi_{\lambda,n}(u))^{3-\sigma_n}\right)f'(\Psi_{\lambda,n}(u))\Psi'_{\lambda,n}(u)(\varphi)\\
		&\;\;\;-\int_\Omega
		\left(u(x)^{3-\sigma_n}+\lambda u(x)^{p-1}\right)\varphi(x) dx\\
		&=\frac{a+bf(\Psi_{\lambda,n}(u))^{2-\sigma_n}}{2a+(4-\sigma_n)bf(\Psi_{\lambda,n}(u))^{2-\sigma_n}}\int_\Omega
		\left((4-\sigma_n)u(x)^{3-\sigma_n}+\lambda p
		u(x)^{p-1}\right)\varphi(x)dx\\
		& \;\;\; -\int_\Omega \left(u(x)^{3-\sigma_n}+\lambda u(x)^{p-1}\right)\varphi(x)dx\\
		&=\int_\Omega\left\{\left[(4-\sigma_n)g_1(\Psi_{\lambda,n}(u))-1\right]u(x)^{3-\sigma_n}\right. \\
		&\;\;\;\left.  + \lambda\left[pg_1(\Psi_{\lambda,n}(u))-1\right]u(x)^{p-1}\right\}\varphi(x)dx,
	\end{split}
\end{equation}
and
\begin{equation}\label{Kder}
	\begin{split}
		K_{\lambda,n}'(u)(\varphi)
		&= \left(4af(\Psi_{\lambda,n}(u))+(4-\sigma_n)^2bf(\Psi_{\lambda,n}(u))^{3-\sigma_n}\right)f'(\Psi_{\lambda,n}(u))\Psi'_{\lambda,n}(u)(\varphi)\\
		&\;\;\;-\int_\Omega
		\left((4-\sigma_n)^2u(x)^{3-\sigma_n}+p^2u(x)^{p-1}\right)\varphi(x)dx\\
		&=\frac{4a+(4-\sigma_n)^2bf(\Psi_{\lambda,n}(u))^{2-\sigma_n}}{2a+(4-\sigma_n)bf(\Psi_{\lambda,n}(u))^{2-\sigma_n}}\int_\Omega
		\left((4-\sigma_n)u(x)^{3-\sigma_n}+\lambda p u(x)^{p-1}\right)\varphi(x) dx\\
		&\;\;\; -\int_\Omega
		\left((4-\sigma_n)^2u(x)^{3-\sigma_n}+\lambda p^2u(x)^{p-1}\right)\varphi dx\\
		&=\int_\Omega\left\{(4-\sigma_n)\left[g_2(\Psi_{\lambda,n}(u))-(4-\sigma_n)\right]u(x)^{3-\sigma_n}\right.\\[2mm]
		& \;\;\;\left.+\lambda p\left[g_2(\Psi_{\lambda,n}(u))- p\right]u(x)^{p-1}\right\}\varphi(x) dx,
	\end{split}
\end{equation}
for each $u,\varphi\in \W$, where
\begin{equation}\label{g1g2}
	g_1(y) :=\frac{a+bf(y)^{2-\sigma_n}}{2a+(4-\sigma_n)bf(y)^{2-\sigma_n}} \quad\text{and} \quad
	g_2(y) :=\frac{4a+(4-\sigma_n)^2bf(y)^{2-\sigma_n}}{2a+(4-\sigma_n)bf(y)^{2-\sigma_n}},
\end{equation}
for  $y\in [0,+\infty)$. By \eqref{Jder} and \eqref{Kder} above we can derive the following result, which we will use to apply Lagrange Multiplier Theorem.

\begin{lemma}\label{criticalKJ}
	The functionals $J_{\lambda,n}$ and $K_{\lambda,n}$ in \eqref{funzpsijk} have no non-zero critical point.
\end{lemma}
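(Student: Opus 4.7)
The plan is to convert the Euler-Lagrange condition $J_{\lambda,n}'(u)=0$ (resp.\ $K_{\lambda,n}'(u)=0$) into a pointwise algebraic identity on $u$, exploiting the decisive fact that in \eqref{Jder} and \eqref{Kder} the coefficients of $u^{3-\sigma_n}$ and $u^{p-1}$ depend on $u$ only through the \emph{scalar} $\Psi_{\lambda,n}(u)$. This reduces the critical-point condition to a one-variable equation for $|u|$.

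Starting with $J_{\lambda,n}$, I would suppose $u\in\W$ satisfies $J_{\lambda,n}'(u)=0$ and set the (scalar) constants $\alpha:=(4-\sigma_n)g_1(\Psi_{\lambda,n}(u))-1$ and $\beta:=p\,g_1(\Psi_{\lambda,n}(u))-1$. Testing \eqref{Jder} against arbitrary $\varphi\in C_c^\infty(\Omega)$ and invoking the fundamental lemma of the calculus of variations gives
\[
\alpha\,|u|^{2-\sigma_n}u+\lambda\beta\,|u|^{p-2}u=0 \quad \text{a.e.\ in } \Omega.
\]
On $\{u\neq 0\}$ this reads $\alpha\,|u|^{4-\sigma_n-p}=-\lambda\beta$, which forces $|u|$ to be a.e.\ constant on that set as soon as $\alpha\neq 0$. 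The case $\alpha=0$ is ruled out at once: substituting $g_1=1/(4-\sigma_n)$ into \eqref{g1g2} collapses to $(4-\sigma_n)a=2a$, i.e.\ $\sigma_n=2$, contradicting $\sigma_n\in(0,4-p)\subset(0,2)$. Hence $|u|$ equals some constant $c\geq 0$ on $\{u\neq 0\}$, and $u$ takes at most the three values $-c,0,c$ a.e.

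It remains to conclude that such a $u$ must be identically zero. By the Stampacchia-type result that $\nabla v=0$ a.e.\ on each level set $\{v=\gamma\}$ for $v\in W^{1,2}(\Omega)$, summing over $\gamma\in\{-c,0,c\}$ yields $\nabla u=0$ a.e.\ in $\Omega$, and since $u\in\W$ this forces $u\equiv 0$, contrary to $u\neq 0$. The argument for $K_{\lambda,n}$ is perfectly parallel: \eqref{Kder} reduces $K_{\lambda,n}'(u)=0$ to $\alpha'\,|u|^{2-\sigma_n}u+\lambda\beta'\,|u|^{p-2}u=0$ a.e., with $\alpha':=(4-\sigma_n)(g_2(\Psi_{\lambda,n}(u))-(4-\sigma_n))$ and $\beta':=p(g_2(\Psi_{\lambda,n}(u))-p)$; here $\alpha'=0$ translates through \eqref{g1g2} to $4a=2(4-\sigma_n)a$, again $\sigma_n=2$, and the same level-set argument closes the proof.

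The only non-trivial step I would want to keep honest is the passage from "$u$ takes finitely many values a.e." to $u\equiv 0$, which rests on the Stampacchia vanishing of the weak gradient on preimages of points. Everything else is purely algebraic, hinging on the single observation that both exceptional conditions $\alpha=0$ and $\alpha'=0$ force $\sigma_n=2$, a value excluded by design through the choice $\sigma_n\in(0,4-p)$.
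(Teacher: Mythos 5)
Your proof is correct and follows essentially the same route as the paper's: reduce the critical-point equation to a pointwise algebraic identity via the fundamental lemma, show the coefficient of $u^{3-\sigma_n}$ cannot vanish, deduce that $u$ takes at most finitely many values, and conclude $\nabla u=0$ a.e., hence $u=0$ in $\W$. The only (harmless) difference is that you rule out the vanishing of that single coefficient directly, by checking that $(4-\sigma_n)g_1\equiv 1$ would force $\sigma_n=2$, whereas the paper instead observes that the two coefficients cannot vanish simultaneously because $p\neq 4-\sigma_n$ and then argues through the resulting alternative.
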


\begin{proof}
\;	Let $u\in \W$ be a critical point of $J_{\lambda,n}$. By \eqref{Jder} we infer that
	\begin{equation}\label{eqcrit}
		\left[(4-\sigma_n)g_1(\Psi_{\lambda,n}(u))-1)\right]u(x)^{3-\sigma_n}+\lambda\left[pg_1(\Psi_{\lambda,n}(u))-1\right]u(x)^{p-1}=0 \quad \text{ for a.e. } x\in\Omega.
	\end{equation}
	Since $p<4-\sigma_n$, the system
	\begin{equation*}
		\left\lbrace
		\begin{array}{l}
			(4-\sigma_n)g_1(y)-1=0\\
			pg_1(y)-1=0
		\end{array}
		\right.,
	\end{equation*}
	has no solution in $y\in[0,+\infty)$.  Therefore, by \eqref{eqcrit} it follows that either $u(x)\equiv 0$ or $u(x)\neq 0$ and, as a result,
	\begin{equation*}
		(4-\sigma_n)g_1(\Psi_{\lambda,n}(u))-1\neq 0,
	\end{equation*}
	from which, again by \eqref{eqcrit},
	\begin{equation*}
		u(x)^{4-p-\sigma_n}=-\lambda\frac{pg_1(\Psi_{\lambda,n}(u))-1}{(4-\sigma_n)g_1(\Psi_{\lambda,n}(u))-1}.
	\end{equation*}
	In particular, $\nabla u=0$ a.e. in $\Omega$, that is $u=0$. Arguing as above, since the system
	\begin{equation*}
		\left\lbrace
		\begin{array}{l}
			g_2(y)-4+\sigma_n=0,\\
			g_2(y)-p=0,
		\end{array}
		\right.
	\end{equation*}
	has no solution in $[0,+\infty)$ as well, then $K_{\lambda,n}$ has no non-zero critical point either and the proof is concluded.
\end{proof}

Now, for each $n\in \N$, let us fix $\xi_n\in (0,\sigma_n)$  and consider the sets
\begin{align*}
	\mathcal{N}_n &:=\left\{u\in \W\setminus\{0\}: I_{\lambda,n}'(u)(u)=a\|u\|^2+b\|u\|^{4-\sigma_n}-\|u\|_{4-\sigma_n}^{4-\sigma_n}-\lambda
	\|u\|_p^p=0\right\}, \\
	\mathcal{A}_n & := \left\{u\in \W\setminus\{0\}: I_{\lambda,n}'(u)(u)\leq 0\ \text{ and }\ K_{\lambda,n}(u)\leq 0\right\}, \\
	\mathcal{C}_{n,R} & :=\left\{u\in \W\setminus\{0\}: \|u\|_{4-\xi_n}\leq R\right\}, \quad \text{ for all } R>0.
\end{align*}

The next lemma provides an estimate of the infimum of $I_{\lambda,n}$ on the intersection of the above sets.
\begin{lemma}\label{mp1}
	There exist $n_0\in \N$ and $\gamma,R_0>0$  such that $\mathcal{N}_n\cap \mathcal{A}_n\cap \mathcal{C}_{n,R_0}\neq \emptyset$ and
	\begin{equation}
		\inf_{\mathcal{N}_n\cap \mathcal{A}_n\cap \mathcal{C}_{n,R_0}} I_{\lambda,n}<\frac{a^2}{4(S^{-2}-b)}-\gamma
	\end{equation}
	for each $n\geq n_0$.
\end{lemma}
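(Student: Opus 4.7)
I will adapt the classical Brezis--Nirenberg strategy: take truncated Aubin--Talenti instantons $u_\varepsilon \in \W$ concentrated at an interior point of $\Omega$, use them (suitably scaled) as test elements on the approximating Nehari manifolds $\mathcal{N}_n$, and show that for a single well-chosen $\varepsilon_0 > 0$ the scaled instantons lie in the whole triple intersection for all large $n$ and realize an energy strictly below the level $a^2/[4(S^{-2}-b)]$. The standard asymptotics I will exploit are, as $\varepsilon \to 0^+$,
\[
\|u_\varepsilon\|^2 = S + O(\varepsilon^2),\qquad \|u_\varepsilon\|_4^4 = 1 + O(\varepsilon^4),\qquad \|u_\varepsilon\|_p^p \geq k_p\,\varepsilon^{4-p}
\]
for some $k_p > 0$; the third estimate crucially uses $p > 2$ in dimension four.

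\textbf{Nehari projection and energy.} For each $n$ and $\varepsilon$ small, the fibering map $t \mapsto I_{\lambda,n}(tu_\varepsilon)$ has a unique positive maximum $t_{n,\varepsilon}$, characterized by
\[
a\|u_\varepsilon\|^2 = t^{2-\sigma_n}\bigl(\|u_\varepsilon\|_{4-\sigma_n}^{4-\sigma_n} - b\|u_\varepsilon\|^{4-\sigma_n}\bigr) + \lambda\, t^{p-2}\|u_\varepsilon\|_p^p,
\]
where the bracket is positive because $\|u_\varepsilon\|_4^4/\|u_\varepsilon\|^4 \to S^{-2} > b$. Eliminating $\|u_\varepsilon\|_{4-\sigma_n}^{4-\sigma_n}$ via the Nehari identity gives the clean formula
\[
I_{\lambda,n}(t_{n,\varepsilon}u_\varepsilon) = \frac{a(2-\sigma_n)}{2(4-\sigma_n)}\,t_{n,\varepsilon}^{2}\|u_\varepsilon\|^2 - \frac{\lambda(4-\sigma_n-p)}{p(4-\sigma_n)}\,t_{n,\varepsilon}^{p}\|u_\varepsilon\|_p^p.
\]
Dropping the positive subcritical term in Nehari yields the upper bound $t_{n,\varepsilon}^{2-\sigma_n} \leq a\|u_\varepsilon\|^2/(\|u_\varepsilon\|_{4-\sigma_n}^{4-\sigma_n} - b\|u_\varepsilon\|^{4-\sigma_n})$, which, together with the asymptotics above and a routine limit as $\sigma_n \to 0$, furnishes
\[
t_{n,\varepsilon}^2\,\|u_\varepsilon\|^2 \leq \frac{a}{S^{-2}-b} + O(\varepsilon^2) + O(\sigma_n).
\]
Simultaneously, the equation forces $t_{n,\varepsilon}$ to stay bounded both above and away from $0$ for $\varepsilon,\sigma_n$ small, so the subcritical summand in the energy contributes at most $-C\lambda\,\varepsilon^{4-p}$ with $C>0$ uniform in $n$.

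\textbf{Choice of parameters and verification of constraints.} Since $4-p<2$, the subcritical gain $\varepsilon^{4-p}$ dominates the $O(\varepsilon^2)$ Aubin--Talenti remainder. Fixing first $\varepsilon_0 > 0$ so small that the critical-energy bound becomes at most $a^2/[4(S^{-2}-b)] - 2\gamma$ with $\gamma > 0$, and then $n_0 \in \N$ large so that $|O(\sigma_n)| \leq \gamma$ for $n \geq n_0$, produces the required energy inequality. To see that $t_{n,\varepsilon_0}u_{\varepsilon_0}$ actually belongs to $\mathcal{A}_n$, observe that on $\mathcal{N}_n$ one has $f(\Psi_{\lambda,n}(u)) = \|u\|$, so the Nehari identity gives
\[
K_{\lambda,n}(u) = -a(2-\sigma_n)\|u\|^2 + \lambda(4-\sigma_n-p)\|u\|_p^p \qquad \text{on } \mathcal{N}_n.
\]
Since $\|t_{n,\varepsilon_0}u_{\varepsilon_0}\|$ is bounded below while $\|t_{n,\varepsilon_0}u_{\varepsilon_0}\|_p^p = O(\varepsilon_0^{4-p})$, for $\varepsilon_0$ small and $n$ large the right-hand side is $\leq 0$. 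Finally, H\"older's inequality $\|u_{\varepsilon_0}\|_{4-\xi_n} \leq \|u_{\varepsilon_0}\|_4\,|\Omega|^{\xi_n/[4(4-\xi_n)]}$, combined with the uniform boundedness of $t_{n,\varepsilon_0}$, provides a uniform bound on $\|t_{n,\varepsilon_0}u_{\varepsilon_0}\|_{4-\xi_n}$, so any sufficiently large $R_0$ works. \emph{The main obstacle} is to control all the $\sigma_n$-dependencies (exponents, coefficients, and the integrals $\|u_\varepsilon\|_{4-\sigma_n}^{4-\sigma_n}$) so that the Brezis--Nirenberg gain $-C\lambda\,\varepsilon_0^{4-p}$ survives uniformly in $n$, rather than being cancelled by the $O(\sigma_n)$ remainder produced by the approximation.
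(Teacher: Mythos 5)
Your proposal is correct and follows essentially the same route as the paper: truncated instantons concentrated at an interior point, projection onto $\mathcal{N}_n$, the Brezis--Nirenberg comparison in which the subcritical gain of order $\varepsilon^{4-p}$ beats the $O(\varepsilon^2)$ critical remainder because $p>2$, verification of $K_{\lambda,n}\le 0$ via the Nehari identity, and the order of quantifiers "first $\varepsilon_0$, then $n_0$" to absorb the $O(\sigma_n)$ errors. The only cosmetic differences are that the paper bounds $\sup_{t>0}I_\lambda(tu_\varepsilon)$ by splitting $t$ into three ranges rather than evaluating the reduced energy at the exact Nehari maximizer, and it checks $K_{\lambda,n}<0$ from the equivalent identity $K_{\lambda,n}=a(2-p)\|u\|^2+(4-p-\sigma_n)\bigl(b\|u\|^{4-\sigma_n}-\|u\|_{4-\sigma_n}^{4-\sigma_n}\bigr)$, which needs no smallness of $\varepsilon$.
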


\begin{proof}
\;	Let $\varphi\in C_0^\infty(\Omega)$ be a non-negative function such that $\varphi\equiv 1$ in a closed ball $\overline{B}_R(x_0)$ centered at $x_0\in\Omega$ and of radius $R>0$ contained in $\Omega$.  For
	each $\varepsilon>0$, consider the function $v_\varepsilon:\Omega\rightarrow \R$ defined by
	\begin{equation}
		v_\varepsilon(x):=\frac{\varphi(x)}{\varepsilon+|x-x_0|^2}, \quad\text{for each } x\in \Omega,
	\end{equation}
	and set
	\begin{equation}
		u_\varepsilon :=\frac{v_\varepsilon}{\|v_\varepsilon\|}.
	\end{equation}
	It is well-known (see, for instance, \cite{bn}) that
	\begin{equation}\label{behave}
		\|v_\varepsilon\|^2=\frac{C_1}{\varepsilon}+O(1), \  \|v_\varepsilon\|_4^2=\frac{C_2}{\varepsilon}+O(\varepsilon), \ \text{and } \
		S^{-2}=S^{-2}\|u_\varepsilon\|\geq \|u_\varepsilon\|_4^4=S^{-2}+O(\varepsilon).
	\end{equation}
	A direct computation shows, moreover, that
	\begin{equation}\label{behave1}
		\|u_\varepsilon\|_p^p=C_3\varepsilon^{\frac{4-p}{2}}+O(\varepsilon^{\frac{p}{2}}),
	\end{equation}
	and thus
	\begin{equation}\label{behave2}
		\frac{\|v_\varepsilon\|_p^p}{\|v_\varepsilon\|^4}=\frac{\|u_\varepsilon\|_p^p}{\|v_\varepsilon\|^{4-p}}=
		\frac{C_3\varepsilon^{\frac{4-p}{2}}+O(\varepsilon^{\frac{p}{2}})}{\left(\displaystyle\frac{C_1}{\varepsilon}+O(1)\right)^{\frac{4-p}{2}}}=C_4\varepsilon^{4-p}+O(\varepsilon^2).
	\end{equation}
	Here, $C_i$, $i=1,\ldots,4$, are suitable positive constants independent of $\varepsilon$. Because of $b<S^{-2}$, \eqref{behave} and \eqref{behave2},  
	we can fix $\varepsilon_0,\eta_0>0$ such that
	\begin{equation}\label{up1}
		b+\eta_0=b\|u_\varepsilon\|^4+\eta_0<\|u_\varepsilon\|_4^4,
	\end{equation}
	and
	\begin{equation}\label{up2}
		\frac{a}{\|v_\varepsilon\|^2}+b\|u_\varepsilon\|^4+\eta_0<\|u_\varepsilon\|_4^4+\lambda\frac{\|v_\varepsilon\|_p^p}{\|v_\varepsilon\|^4},
	\end{equation}
	for all  $\varepsilon\in (0,\varepsilon_0)$. By \eqref{up1} and \eqref{up2} we get
	\begin{equation}
		b\|v_\varepsilon\|^4< \|v_\varepsilon\|_4^4 \quad \text{and}\quad a\|v_\varepsilon\|^2+b\|v_\varepsilon\|^4<\|v_\varepsilon\|_4^4+\lambda \|v_\varepsilon\|_p^p,
	\end{equation}
	for all  $\varepsilon\in (0,\varepsilon_0)$.  Since $\|v_\varepsilon\|^{4-\sigma_n}\rightarrow \|v_\varepsilon\|^4$ and
	$\|v_\varepsilon\|_{4-\sigma_n}^{4-\sigma_n}\rightarrow \|v_\varepsilon\|_4^4$ as $n\to+\infty$, for each $\varepsilon\in (0,\varepsilon_0)$ there exists $n_\varepsilon\in\N$ such that
	\begin{equation*}
		b\|v_\varepsilon\|^{4-\sigma_n}< \|v_\varepsilon\|_{4-\sigma_n}^{4-\sigma_n} \quad\text{and} \quad
		a\|v_\varepsilon\|^2+b\|v_\varepsilon\|^{4-\sigma_n}<\|v_\varepsilon\|_{4-\sigma_n}^{4-\sigma_n}+\lambda \|v_\varepsilon\|_p^p,
	\end{equation*}
	for each $n\geq n_\varepsilon$. Consider such an $n$. Since $2<p<4-\sigma_n$, one has
	\begin{equation*}
		a\|tv_\varepsilon\|^2+b\|tv_\varepsilon\|^{4-\sigma_n}>\|tv_\varepsilon\|_{4-\sigma_n}^{4-\sigma_n}+\lambda \|tv_\varepsilon\|_p^p
	\end{equation*}
	for $t>0$ small enough. Consequently, $t^*v_\varepsilon \in \mathcal{N}_n$, for some $t^*\in (0,1)$. This means that
	\begin{equation*}
		f\left(\Psi_{\lambda,n}(t^*v_{\varepsilon})\right)=\|t^*v_{\varepsilon}\|,
	\end{equation*}
	which, together with
	\begin{equation*}
		b\|t^*v_\varepsilon\|^{4-\sigma_n}< \|t^*v_\varepsilon\|_{4-\sigma_n}^{4-\sigma_n},
	\end{equation*}
	yields
	\begin{align*}
		K_{\lambda,n}(t^*v_\varepsilon) &=2af(\Psi_{\lambda,n}(t^*v_\varepsilon))^2+b(4-\sigma_n)f(\Psi_{\lambda,n}(t^*v_\varepsilon))^{4-\sigma_n}
		-(4-\sigma_n)\|t^*v_\varepsilon\|_{4-\sigma_n}^{4-\sigma_n}-\lambda
		p\|t^*v_\varepsilon\|_p^p\\
		&=a(2-p)\|t^*v_\varepsilon\|^2+(4-p-\sigma_n)\left(b\|t^*v_\varepsilon\|^{4-\sigma_n}-\|t^*v_\varepsilon\|_{4-\sigma_n}^{4-\sigma_n}\right)\\
		&<0,
	\end{align*}
	that is, $t^*v_\varepsilon\in \mathcal{N}_n\cap \mathcal{A}_n$.
	
	Now, notice that, for each $t>0$ and $\varepsilon\in (0,\varepsilon_0)$, one has
	\begin{equation}\label{Il}
		I_{\lambda}(tu_\varepsilon)=\frac{a}{2}t^2 +\frac{b-\|u_\varepsilon\|_4^4}{4}t^4-\lambda \frac{\|u_\varepsilon\|_p^p}{p}t^p\leq
		\frac{a}{2}t^2-\frac{\eta_0}{4}t^4.
	\end{equation}
	Moreover, if $\varepsilon \in (0,\varepsilon_0)$, the function
	\begin{equation*}
		(0,+\infty)\ni \tau\mapsto \frac{a}{2}\tau+\frac{b-\|u_\varepsilon\|_4^4}{4}\tau^2
	\end{equation*}
	attains its global maximum at
	\begin{equation*}\label{tau}
		\tau_\varepsilon:=\frac{a}{\|u_\varepsilon\|_4^4-b}\in \left[\frac{a}{S^{-2}-b},\frac{a}{\eta_0}\right),
	\end{equation*}
	and one has
	\begin{equation*}
		\max_{\tau>0}\left(\frac{a}{2}\tau+\frac{b-\|u_\varepsilon\|_4^4}{4}\tau^2\right)=\frac{a^2}{4(\|u_\varepsilon\|_4^4-b)}.
	\end{equation*}
	By using \eqref{Il}, we can find 
	$t_0\in \left(0,\sqrt{\frac{a}{S^{-2}-b}}\right)$ such that
	\begin{equation}\label{I2}
		I_{\lambda}(t u_\varepsilon)<\frac{a^2}{8(S^{-2}-b)}, \quad \text{for each } t\in (0, t_0) \text{ and } \varepsilon\in (0,\varepsilon_0),
	\end{equation}
	and, in addition,
	\begin{equation}\label{I1}
		I_{\lambda}(t u_\varepsilon)<-1, \quad \text{for each } t\geq t_1:=\sqrt{\frac{a+\sqrt{a+4\eta_0}}{\eta_0}} \text{ and } \varepsilon\in (0,\varepsilon_0).
	\end{equation}
	On the other hand, for every $t\in [t_0,t_1]$ and $\varepsilon \in (0,\varepsilon_0)$, we deduce the estimate
	\begin{equation}\label{I3}
		\begin{split}
			I_\lambda(tu_\varepsilon) &\leq \frac{a^2}{4(\|u_\varepsilon\|_4^4-b)}-\lambda \frac{t_0^p}{p}\|u_\varepsilon\|_p^p\\
			&=\frac{a^2}{4(S^{-2}-b)} +\frac{a^2}{4(\|u_\varepsilon\|_4^4-b)}-\frac{a^2}{4(S^{-2}-b)}-\lambda \frac{t_0^p}{p}\|u_\varepsilon\|_p^p.
		\end{split}
	\end{equation}
	By taking \eqref{behave}, \eqref{behave1} and the relations $\displaystyle 0<\frac{4-p}{2}<1<\frac{p}{2}$ into account, choosing $\varepsilon_0$ smaller if necessary, one has
	\begin{equation}\label{ep}
		\begin{split}
			&\frac{a^2}{4(\|u_\varepsilon\|_4^4-b)}-\frac{a^2}{4(S^{-2}-b)}-\lambda \frac{t_0^p}{p}
			\|u_\varepsilon\|_p^p \\
			&=\frac{a^2}{4(S^{-2}+O(\varepsilon)-b)}-\frac{a^2}{4(S^{-2}-b)}-\lambda C_3 \frac{{t_0}^p}{p}\varepsilon^{\frac{4-p}{2}}+O(\varepsilon^{\frac{p}{2}})\\
			&=O(\varepsilon)-\lambda C_3 \frac{{t_0}^p}{p}\varepsilon^{\frac{4-p}{2}}+O(\varepsilon^{\frac{p}{2}})\\
			&<0,
		\end{split}
	\end{equation}
	for each $\varepsilon \in (0,\varepsilon_0)$. Therefore, if we fix $\varepsilon \in (0,\varepsilon_0)$, by \eqref{I2}, \eqref{I1}, \eqref{I3} and \eqref{ep}, we get
	\begin{equation}\label{supIl}
		\sup_{t>0}I_{\lambda}(t u_\varepsilon)<\frac{a^2}{4(S^{-2}-b)}-\gamma,
	\end{equation}
	with
	$$
	\gamma=\frac{1}{2}\min\left\{\frac{a^2}{8(S^{-2}-b)},-\left( O(\varepsilon)-\lambda
	C_3\frac{{t_0}^p}{p}\varepsilon^{\frac{4-p}{2}}+O(\varepsilon^{\frac{p}{2}})\right)\right\}>0.
	$$
	Since $\|u_\varepsilon\|^{4-\sigma_n}=\|u_\varepsilon\|^4=1$ and $\|u_\varepsilon\|_{4-\sigma_n}^{4-\sigma_n}=\|u_\varepsilon\|_4^4$ as $n\to+\infty$, choosing possibly a bigger $n_\varepsilon\in\N$ we finally get
	\begin{equation*}
		\inf_{\mathcal{A}_n\cap \mathcal{N}_n\cap \mathcal{C}_{n,R_0}}I_{\lambda,n}\leq I_{\lambda,n}(t^*v_\varepsilon)\leq \max_{t>0}I_{\lambda,n}(t
		u_\varepsilon)<\frac{a^2}{4(S^{-2}-b)}-\gamma,
	\end{equation*}
	for  all  $n\geq n_\varepsilon$. The conclusion then follows by taking $n_0=n_\varepsilon$ and $R_0:=\sup_{n\in \N}\|v_\varepsilon\|_{4-\xi_n}.$
\end{proof}

\medskip

In the following, without loss of generality, we assume that $n_0=1$ and we fix a positive number $R$ such that
\begin{eqnarray}\label{R1}
	R\geq R_0 \quad \text{and} \quad R^2>\frac{2ap}{(S^{-2}-b)(p-2)^2}\max\left\{1,\frac{\sup_{n\in\N}|\Omega|^{\frac{\xi_n}{2(4-\xi_n)}}}{S}\right\},
\end{eqnarray}
where $R_0$ is as in Lemma \ref{mp1}.

\begin{lemma}\label{minim}
	Let $r_0$ be the unique positive solution of the equation
	\begin{equation}
		ax^2+(b-S^{-2})x^4-\lambda c_p^px^p=0,
	\end{equation}
	and let $R>0$ be as in $(\ref{R1})$. Then, for each $r\in (0,r_0)$, there exists $n_r\in \N$ with the following property: for each $n\geq n_r$
	there exists a non-negative function $u_n\in \mathcal{A}_n\cap
	\mathcal{N}_n\cap \mathcal{C}_{n,R}$ such that $\|u_n\|\geq r$ and
	\begin{equation}\label{Jlambdanun}
		J_{\lambda,n}(u_n)=\inf_{\mathcal{A}_n\cap \mathcal{C}_{n,R}}J_{\lambda,n}=\inf_{\mathcal{A}_n\cap \mathcal{N}_n\cap \mathcal{C}_{n,R}}J_{\lambda,n}
		\geq \frac{a(p-2)(2-\sigma_n)}{2p(4-\sigma_n)}\|u_n\|^2.
	\end{equation}
\end{lemma}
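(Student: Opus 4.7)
The strategy is to apply the direct method to $J_{\lambda,n}$ on the closed set $\mathcal{A}_n\cap\mathcal{C}_{n,R}$, verify that the weak limit of a minimizing sequence is a nonzero element of this set, then use the Lagrange Multiplier Theorem (together with Lemma~\ref{criticalKJ} and the explicit choice \eqref{R1} of $R$) to argue that the minimizer actually lies on $\mathcal{N}_n$, from which the announced lower bound in \eqref{Jlambdanun} is immediate. First I would observe that $J_{\lambda,n}$ is bounded below on $\mathcal{A}_n\cap\mathcal{C}_{n,R}$: the terms involving $f(\Psi_{\lambda,n}(u))$ are nonnegative, and on $\mathcal{C}_{n,R}$ H\"older's inequality bounds $\|u\|_{4-\sigma_n}$ and $\|u\|_p$ in terms of $R$ (since $p,4-\sigma_n<4-\xi_n$). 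For a minimizing sequence $(u_k)$ the $\mathcal{A}_n$-inequality $a\|u_k\|^2+b\|u_k\|^{4-\sigma_n}\leq\|u_k\|_{4-\sigma_n}^{4-\sigma_n}+\lambda\|u_k\|_p^p$ then yields a uniform $\W$-bound, and up to a subsequence $u_k\rightharpoonup u_n$ weakly in $\W$ and strongly in every $L^m(\Omega)$ with $m<4$.

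I would then check that $u_n\in\mathcal{A}_n\cap\mathcal{C}_{n,R}$ and $J_{\lambda,n}(u_k)\to J_{\lambda,n}(u_n)$. The box constraint $\|u_n\|_{4-\xi_n}\leq R$ passes by strong $L^{4-\xi_n}$-convergence; weak lower semicontinuity of $\|\cdot\|^2$ and $\|\cdot\|^{4-\sigma_n}$ together with the continuity of the subcritical terms gives $I_{\lambda,n}'(u_n)(u_n)\leq 0$; and because $K_{\lambda,n}$ and $J_{\lambda,n}$ depend on $u$ only through $\Psi_{\lambda,n}(u)$, $\|u\|_{4-\sigma_n}^{4-\sigma_n}$ and $\|u\|_p^p$ coupled with the continuous function $f$, both $K_{\lambda,n}(u_n)\leq 0$ and $J_{\lambda,n}(u_k)\to J_{\lambda,n}(u_n)$ follow. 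To rule out $u_n=0$, I would exploit the general identity
\[
J_{\lambda,n}(u)=\frac{a(2-\sigma_n)}{2(4-\sigma_n)}f(\Psi_{\lambda,n}(u))^2-\frac{\lambda(4-\sigma_n-p)}{p(4-\sigma_n)}\|u\|_p^p,
\]
rewrite $K_{\lambda,n}(u)\leq 0$ as $\lambda(4-\sigma_n-p)\|u\|_p^p\leq a(2-\sigma_n)f(\Psi_{\lambda,n}(u))^2$, and use $\|u\|\leq f(\Psi_{\lambda,n}(u))$ (which is exactly $I_{\lambda,n}'(u)(u)\leq 0$) to deduce $J_{\lambda,n}(u)\geq\frac{a(p-2)(2-\sigma_n)}{2p(4-\sigma_n)}\|u\|^2$ on $\mathcal{A}_n$. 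Combined with the uniform $\|u\|\geq \delta_n>0$ on $\mathcal{A}_n\setminus\{0\}$ coming from Sobolev and the $\mathcal{A}_n$-inequality, this makes the infimum strictly positive, whereas $J_{\lambda,n}(0)=0$, so $u_n\neq 0$.

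The crucial step is showing $u_n\in\mathcal{N}_n$. I would apply the Lagrange Multiplier Theorem to the three inequality constraints defining $\mathcal{A}_n\cap\mathcal{C}_{n,R}$: Lemma~\ref{criticalKJ} rules out $J_{\lambda,n}'(u_n)=0$, so at least one multiplier is nonzero. The upper bound $J_{\lambda,n}(u_n)<\frac{a^2}{4(S^{-2}-b)}-\gamma$ from Lemma~\ref{mp1} (valid since $R\geq R_0$ and $J_{\lambda,n}\equiv I_{\lambda,n}$ on $\mathcal{N}_n$), combined with the lower bound of the previous paragraph, yields $\|u_n\|^2\leq\frac{ap(4-\sigma_n)}{2(p-2)(2-\sigma_n)(S^{-2}-b)}$; Sobolev and H\"older together with the explicit condition \eqref{R1} then force $\|u_n\|_{4-\xi_n}<R$ for $n$ large, so the $\mathcal{C}_{n,R}$-constraint is inactive. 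To exclude the configuration in which only the $K_{\lambda,n}$-constraint is active, I would write the Lagrange identity $J_{\lambda,n}'(u_n)=\mu_2 K_{\lambda,n}'(u_n)$ pointwise via \eqref{Jder} and \eqref{Kder}: matching coefficients of $u_n^{3-\sigma_n}$ and $u_n^{p-1}$ gives the single scalar consistency condition $g_2(\Psi_{\lambda,n}(u_n))+p(4-\sigma_n)g_1(\Psi_{\lambda,n}(u_n))=p+(4-\sigma_n)$, which after substituting \eqref{g1g2} collapses to $(p-2)(2-\sigma_n)=0$, impossible. Hence the Nehari constraint is the active one, $I_{\lambda,n}'(u_n)(u_n)=0$, and $u_n$ may be taken nonnegative (replacing it with $|u_n|$ preserves every quantity at play).

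With $u_n\in\mathcal{N}_n$ in hand, the lower bound in \eqref{Jlambdanun} follows by substituting the Nehari identity into $I_{\lambda,n}(u_n)=J_{\lambda,n}(u_n)$ to obtain $J_{\lambda,n}(u_n)=\frac{a(2-\sigma_n)}{2(4-\sigma_n)}\|u_n\|^2-\frac{\lambda(4-\sigma_n-p)}{p(4-\sigma_n)}\|u_n\|_p^p$, and then using that $K_{\lambda,n}(u_n)\leq 0$ reads $\lambda(4-\sigma_n-p)\|u_n\|_p^p\leq a(2-\sigma_n)\|u_n\|^2$ on $\mathcal{N}_n$. For the bound $\|u_n\|\geq r$: the Nehari identity together with Sobolev and H\"older gives $F_n(\|u_n\|)\leq 0$ with $F_n(x):=a+(b-S^{-(4-\sigma_n)/2}|\Omega|^{\sigma_n/4})x^{2-\sigma_n}-\lambda c_p^p x^{p-2}$; as $\sigma_n\to 0$ these converge uniformly on $[0,r_0]$ to the strictly decreasing $F(x):=a+(b-S^{-2})x^2-\lambda c_p^p x^{p-2}$ whose unique positive zero is $r_0$. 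Hence $F(r)>0$ for every $r<r_0$, so $F_n(r)>0$ for all sufficiently large $n$, incompatible with $F_n(\|u_n\|)\leq 0$ unless $\|u_n\|>r$. The main obstacle is the Lagrange-multiplier dichotomy of paragraph three, especially the algebraic verification that the $K_{\lambda,n}$-only case collapses to the impossible identity $(p-2)(2-\sigma_n)=0$.
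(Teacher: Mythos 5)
Your proposal is correct and follows the same overall strategy as the paper's proof (direct minimization over the weakly compact set $\mathcal{A}_n\cap\mathcal{C}_{n,R}$, then a Lagrange-multiplier argument combined with Lemma \ref{criticalKJ} and a pointwise rigidity argument to push the minimizer onto $\mathcal{N}_n$, then the estimate via $K_{\lambda,n}\leq 0$), but two of your sub-steps genuinely differ and both check out. First, you eliminate the constraint $\|u\|_{4-\xi_n}\leq R$ by showing it is \emph{inactive}, combining the upper bound of Lemma \ref{mp1} with your coercivity estimate and the choice \eqref{R1} to get $\|u_n\|_{4-\xi_n}<R$; the paper instead carries a third multiplier $\nu$ and kills it through the pointwise analysis of the three powers $u_n^{3-\sigma_n},u_n^{3-\xi_n},u_n^{p-1}$. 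Your route anticipates the norm bound that the paper only establishes later in Lemma \ref{minim2}, and it is not circular precisely because your identity $J_{\lambda,n}(u)=\frac{a(2-\sigma_n)}{2(4-\sigma_n)}f(\Psi_{\lambda,n}(u))^2-\frac{\lambda(4-\sigma_n-p)}{p(4-\sigma_n)}\|u\|_p^p$ (a correct consequence of $af^2+bf^{4-\sigma_n}=\Psi_{\lambda,n}(u)$) yields the lower bound $J_{\lambda,n}\geq \frac{a(p-2)(2-\sigma_n)}{2p(4-\sigma_n)}\|\cdot\|^2$ on all of $\mathcal{A}_n$, a mild strengthening of the paper's version, which is proved only on $\mathcal{N}_n\cap\mathcal{A}_n$. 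Second, I verified your consistency condition $p(4-\sigma_n)g_1+g_2=p+(4-\sigma_n)$: upon substituting \eqref{g1g2} the dependence on $f(\Psi_{\lambda,n}(u_n))$ cancels and one is left with $a(p-2)(2-\sigma_n)=0$, which is exactly the paper's contradiction $p=2$ reached by a shorter computation. The only points you leave implicit are routine: ``matching coefficients'' in the identity $\nu_1u_n^{3-\sigma_n}+\nu_2u_n^{p-1}=0$ requires the same ``$u_n$ would take finitely many values, hence $\nabla u_n=0$'' rigidity argument used in Lemma \ref{criticalKJ}, and the conclusion $\|u_n\|\geq r$ needs $F_n>0$ on the whole interval $(0,r]$ (not merely at $r$), which does follow from your uniform convergence claim together with the monotonicity of the limit $F$.
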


\begin{proof}
\;	At first, we show that $\mathcal{A}_n$ is sequentially weakly closed in $\W$. Let $\{u_k\}$ be a sequence in $\mathcal{A}_n$ weakly converging to
	$u^*\in \W$. Since the functional
	$$
	\W\ni u \mapsto I'_{\lambda,n}(u)(u)
	$$
	is sequentially weakly lower semicontinuous and the functional $K_{\lambda,n}$ is sequentially weakly continuous, one has
	\begin{equation}\label{sc}
		I_{\lambda,n}'(u^*)(u^*)\leq 0 \quad \text{and} \quad K_{\lambda,n}(u^*)\leq 0.
	\end{equation}
	Let us to show that $u^*\neq 0$.  Letting $r>0$ and $u\in \W$ with $s:=\|u\|<r$, one has
	\begin{equation}\label{ineq1}
		\begin{split}
			I_{\lambda,n}'(u)(u)&=as^2+bs^{4-\sigma_n}-\|u\|_{4-\sigma_n}^{4-\sigma_n}-\lambda\|u\|_p^p\\
			&\geq as^2+bs^{4-\sigma_n}-|\Omega|^{\frac{\sigma_n}{4}}\|u\|_{4}^{4-\sigma_n}-\lambda c_p^ps^p\\
			&\geq as^2+bs^{4-\sigma_n}- |\Omega|^{\frac{\sigma_n}{4}}S^{-\frac{4-\sigma_n}{2}}s^{4-\sigma_n}-\lambda c_p^ps^p\\
			&>0,
		\end{split}
	\end{equation}
	if $r$ is small enough. Therefore
	\begin{equation}\label{bl}
		\|u\|\geq r \quad\text{for each } u\in \mathcal{A}_n.
	\end{equation}
	In particular, for each $k\in \N$, one has
	$$
	\|u_k\|_{4-\sigma_n}^{4-\sigma_n}+\lambda \|u_k\|_p^p\geq a\|u_k\|^2+b\|u_k\|^{4-\sigma_n}\geq ar^2+br^{4-\sigma_n}>0,
	$$
	and taking the limit as $k\rightarrow +\infty$,
	$$
	\|u^*\|_{4-\sigma_n}^{4-\sigma_n}+\lambda \|u^*\|_p^p\geq ar^2+br^{4-\sigma_n}>0,
	$$
	which implies $u^*\neq 0$, as desired. This means, taking also \eqref{sc} into account, that $u^*\in \mathcal{A}_n$ and hence $\mathcal{A}_n$ is a sequentially weakly closed subset of $\W$.
	
	Now, it is easy to see that the set  $\mathcal{A}_n\cap \mathcal{C}_{n,R}$ is sequentially weakly closed and bounded, that is, weakly compact. Together with the sequential weak continuity of $J_{\lambda,n}$, this implies the existence of $u_n\in \mathcal{A}_n\cap \mathcal{C}_{n,R}$ such that
	$$
	J_{\lambda,n}(u_n)=\inf_{u\in \mathcal{A}_n\cap \mathcal{C}_{n,R}}J_{\lambda,n}(u).
	$$
	Clearly, we may assume $u_n\geq 0$. We now show that $u_n\in \mathcal{N}_n$. If not, then
	$$
	a\|u_n\|^2+b\|u_n\|^{4-\sigma_n}<\|u_n\|_{4-\sigma_n}^{4-\sigma_n}+\lambda \|u_n\|_p^p
	$$
	and $u_n$ would be a local minimizer of $J_{\lambda,n}$ restricted to the set
	$$
	\{u\in \W\setminus\{0\}: K_{\lambda,n}(u)\leq 0, \ \|u\|_{4-\xi_n}\leq R\}.
	$$
	Since the set
	$$
	\{u\in \W\setminus\{0\}: K_{\lambda,n}(u)=0, \ \|u\|_{4-\xi_n}=R\}
	$$
	is a $C^1$--manifold, having $K_{\lambda,n}$ (and $u\mapsto \|u\|_{4-\xi_n}^{4-\xi_n}-R^{4-\xi_n}$) no non-zero critical point (see Lemma
	\ref{criticalKJ}), by Lagrange Multiplier Theorem there should exist $\mu,\nu\in \R$ such that
	\begin{equation}\label{eq1}
		J_{\lambda,n}'(u_n)(u)+\mu K'_{\lambda,n}(u_n)(u)+\nu \int_\Omega u_n^{3-\xi_n}udx=0, \quad\text{for each } u\in \W.
	\end{equation}
	By \eqref{Jder}, \eqref{Kder} and \eqref{eq1} we would get
	\begin{equation}\label{eqg}
		\begin{split}
			&\left[(4-\sigma_n)g_1(\Psi_{\lambda,n}(u_n))-1+\mu(4-\sigma_n)(g_2(\Psi_{\lambda,n}(u_n))-4+\sigma_n)\right]u_n(x)^{3-\sigma_n} + \nu u_n(x)^{3-\xi_n} \\
			& + \lambda\left[p g_1(\Psi_{\lambda,n}(u_n))-1+\mu p(g_2(\Psi_{\lambda,n}(u_n))-p)\right]u_n(x)^{p-1}=0, \quad \text{ a.e. in } \Omega.
		\end{split}
	\end{equation}
	Denote by $\nu_1$ and $\nu_2$ the coefficients of $u_n(x)^{3-\sigma_n}$ and $u_n(x)^{p-1}$ in \eqref{eqg}, respectively. If $u_n(x)\neq 0$, then, by
	\eqref{eqg} $u_n(x)$ would solve the equation
	\begin{equation*}
		\nu_1 y^{4-p-\sigma_n}+\nu y^{4-p-\xi_n}+\nu_2=0
	\end{equation*}
	which has at most two solutions in $[0,+\infty)$ if at least one of the coefficients $\nu_1,\nu,\nu_2$ is non-zero. In this case, $u_n$ would attain at most three distinct values in $\Omega$, and then $\nabla u_n=0$ a.e. in $\Omega$, i.e. $u_n=0$, a contradiction. Consequently, it must be
	$\nu=0$,
	\begin{equation*}
		\begin{split}
			\nu_1 & =(4-\sigma_n)g_1(\Psi_{\lambda,n}(u_n))-1+\mu(4-\sigma_n)(g_2(\Psi_{\lambda,n}(u_n))-4+\sigma_n)=0,\\
			\nu_2 & = \lambda( p g_1(\Psi_{\lambda,n}(u_n))-1+\mu p(g_2(\Psi_{\lambda,n}(u_n))-p))=0
		\end{split}
	\end{equation*}
	that is
	\begin{equation*}
		\begin{split}
			g_1(\Psi_{\lambda,n}(u_n))+\mu g_2(\Psi_{\lambda,n}(u_n)) &=\mu(4-\sigma_n)+\frac{1}{4-\sigma_n},\\
			g_1(\Psi_{\lambda,n}(u_n))+\mu g_2(\Psi_{\lambda,n}(u_n))&=\mu p+\frac{1}{p}.
		\end{split}
	\end{equation*}
	By the above identities, we get $\mu=\frac{1}{p(4-\sigma_n)}$ and
	\begin{equation}\label{eqg1}
		g_1(\Psi_{\lambda,n}(u_n))+\frac{1}{p(4-\sigma_n)} g_2(\Psi_{\lambda,n}(u_n))=\frac{1}{4-\sigma_n}+\frac{1}{p}.
	\end{equation}
	Now, by  \eqref{g1g2}, it turns out that
	\begin{equation*}
		g_2(\Psi_{\lambda,n}(u_n))=
		(4-\sigma_n)^2g_1(\Psi_{\lambda,n}(u_n))-\frac{a((4-\sigma_n)^2-4)}{2a+(4-\sigma_n)bf(\Psi_{\lambda,n}(u_n))^{2-\sigma_n}}.
	\end{equation*}
	Plugging the above relation in \eqref{eqg1}, we get
	\begin{equation*}
		\left(1+\frac{4-\sigma_n}{p}\right)g_1(\Psi_{\lambda,n}(u_n))=\frac{1}{p(4-\sigma_n)} \cdot
		\frac{a((4-\sigma_n)^2-4)}{2a+(4-\sigma_n)bf(\Psi_{\lambda,n}(u_n))^{2-\sigma_n}}+\frac{4-\sigma_n+p}{p(4-\sigma_n)},
	\end{equation*}
	from which
	\begin{equation*}
		g_1(\Psi_{\lambda,n}(u_n))-\frac{1}{4-\sigma_n}=\frac{1}{(4-\sigma_n+p)(4-\sigma_n)} \cdot
		\frac{a((4-\sigma_n)^2-4)}{2a+(4-\sigma_n)bf(\Psi_{\lambda,n}(u_n))^{2-\sigma_n}}.
	\end{equation*}
	Finally, using the definition of $g_1$ in the above identity, we get
	$$
		\frac{(2-\sigma_n)a}{2a+(4-\sigma_n)bf(\Psi_{\lambda,n}(u_n))^{2-\sigma_n}}=\frac{1}{4-\sigma_n+p} \cdot
		\frac{a((4-\sigma_n)^2-4)}{2a+(4-\sigma_n) b f(\Psi_{\lambda,n}(u_n))^{2-\sigma_n}}
	$$
	that implies
	$$
	(2-\sigma_n)(4-\sigma_n+p)=(4-\sigma_n)^2-4.
	$$
	By dividing both sides by $2-\sigma_n$, one has
	$$
	4-\sigma_n+p=6-\sigma_n
	$$
	and so $p=2$, a contradiction. Hence, $u_n\in \mathcal{N}_n$ and, in particular,
	$$
	J_{\lambda,n}(u_n)=\inf_{u\in \mathcal{A}_n\cap \mathcal{C}_{n,R_0}}J_{\lambda,n}(u)=\inf_{u\in
		\mathcal{A}_n\cap\mathcal{N}_n\cap\mathcal{C}_{n,R_0}}J_{\lambda,n}(u).
	$$
	Let us finally show that the inequality in \eqref{Jlambdanun} holds. Let $r\in (0,r_0)$. By the definition of $r_0$, we can find $n_{r}>0$ such that
	\begin{equation*}
		as^2+bs^{4-\sigma_n}-|\Omega|^{\frac{\sigma_n}{4}}S^{-\frac{4-\sigma_n}{2}}s^{4-\sigma_n}-\lambda c_p^ps^p>0,
	\end{equation*}
	for each $n\geq n_{r}$ and $s\in (0,r)$. For such $n$ and $r$, in the light of \eqref{ineq1} and the fact that $u_n\in\mathcal{A}_n$, we deduce
	at once that $\left\| u_n\right\| \geq r$. If $u\in \mathcal{N}_n\cap \mathcal{A}_n$, then
	\begin{equation}\label{ineq2}
		a\|u\|^2+b\|u\|^{4-\sigma_n}=\Psi_{\lambda,n}(u)=\|u\|_{4-\sigma_n}^{4-\sigma_n}+\lambda \|u\|_p^p
	\end{equation}
	and, by the definition of $f$, one has
	\begin{equation}\label{fu1}
		f(\Psi_{\lambda,n}(u))=f(a\|u\|^2+b\|u\|^{4-\sigma_n})=\|u\|.
	\end{equation}
	In addition, by $K_{\lambda,n}(u)\leq 0$ one also has
	\begin{equation*}
		2a\|u\|^2+b(4-\sigma_n)\|u\|^{4-\sigma_n}\leq (4-\sigma_n)\|u\|_{4-\sigma_n}^{4-\sigma_n}+\lambda p \|u\|_p^p
	\end{equation*}
	and, by \eqref{ineq2},
	\begin{equation}\label{ineq3}
		(4-p-\sigma_n)(b\|u\|^{4-\sigma_n}-\|u\|_{4-\sigma_n}^{4-\sigma_n})\leq a(p-2)\|u\|^2.
	\end{equation}
	In conclusion, with the aid of \eqref{bl}, \eqref{ineq2}, \eqref{fu1} and \eqref{ineq3}, we get
	\begin{equation}\label{Ju0}
		\begin{split}
			J_{\lambda,n}(u)&=a\left(\frac{1}{2}-\frac{1}{p}\right)\|u\|^2 +\left(\frac{1}{4-\sigma_n}-\frac{1}{p}\right)\left(b
			\|u\|^{4-\sigma_n}- \|u\|_{4-\sigma_n}^{4-\sigma_n}\right)\\
			&= a\frac{p-2}{2p}\|u\|^2-\frac{4-p-\sigma_n}{p(4-\sigma_n)}\left(b
			\|u\|^{4-\sigma_n}- \|u\|_{4-\sigma_n}^{4-\sigma_n}\right)\\
			&\geq a\frac{p-2}{2p}\|u\|^2-a\frac{p-2}{p(4-\sigma_n)}\|u\|^2\\ 
			&\geq \frac{a(p-2)(2-\sigma_n)}{2p(4-\sigma_n)}\|u\|^2.
		\end{split}
	\end{equation}
	This concludes the proof.
\end{proof}

In what follows $r\in (0,r_0)$ is fixed and we will assume, without loss of generality, that $n_r=1$. Collecting the results from the previous two lemmas we obtain the following one.

\begin{lemma}\label{minim2}
	Let $\gamma>0$ be as in Lemma \ref{mp1} and $R>0$ be as in  (\ref{R1}). Then, if $n\in \N$, and
	$u_n\in \mathcal{A}_n\cap \mathcal{N}_n\cap \mathcal{C}_{n,R}$ is as in Lemma \ref{minim}, one has  $\|u_n\|\leq R$ and
	$$
	\frac{a(p-2)(2-\sigma_n)}{2p(4-\sigma_n)} r^2 \leq I_{\lambda,n}(u_n)=\inf_{\mathcal{A}_n\cap \mathcal{N}_n}I_{\lambda,n}<\frac{a^2}{4(S^{-2}-b)}-\gamma
	$$
\end{lemma}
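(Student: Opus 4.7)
The plan is to chain the outputs of Lemma \ref{minim} and Lemma \ref{mp1} with a quantitative a priori bound derived from the very choice of $R$ in \eqref{R1}. The key preliminary observation is that $I_{\lambda,n}$ and $J_{\lambda,n}$ coincide on $\mathcal{N}_n$: indeed, on $\mathcal{N}_n$ one has $\Psi_{\lambda,n}(u)=a\|u\|^2+b\|u\|^{4-\sigma_n}$, so by the definition of $f$ it follows that $f(\Psi_{\lambda,n}(u))=\|u\|$. From this identity, the two outer inequalities in the statement are almost immediate: the lower bound $I_{\lambda,n}(u_n)\geq \frac{a(p-2)(2-\sigma_n)}{2p(4-\sigma_n)}r^2$ follows from the last inequality in Lemma \ref{minim} combined with $\|u_n\|\geq r$, while the upper bound $I_{\lambda,n}(u_n)<\frac{a^2}{4(S^{-2}-b)}-\gamma$ follows from Lemma \ref{mp1} together with the inclusion $\mathcal{C}_{n,R_0}\subseteq \mathcal{C}_{n,R}$, guaranteed by $R\geq R_0$.

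The central step is an a priori bound on every low-energy competitor in $\mathcal{A}_n\cap\mathcal{N}_n$. Since $\sigma_n<4-p$ and $\sigma\mapsto (2-\sigma)/(4-\sigma)$ is decreasing, one gets $(2-\sigma_n)/(4-\sigma_n)>(p-2)/p$; substituting this into the last line of Lemma \ref{minim} produces, for every $v\in\mathcal{A}_n\cap\mathcal{N}_n$,
\[
J_{\lambda,n}(v)\;\geq\;\frac{a(p-2)^2}{2p^2}\|v\|^2.
\]
Consequently, any such $v$ with $J_{\lambda,n}(v)<\frac{a^2}{4(S^{-2}-b)}$ satisfies $\|v\|^2<\frac{ap^2}{2(S^{-2}-b)(p-2)^2}$, and since $p<4$ the first factor in the $\max$ in \eqref{R1} yields $\|v\|<R$. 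Applied to $v=u_n$, which belongs to this sublevel set by the upper bound just established, this gives the required $\|u_n\|\leq R$.

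To upgrade the infimum from $\mathcal{A}_n\cap\mathcal{N}_n\cap\mathcal{C}_{n,R}$ to all of $\mathcal{A}_n\cap\mathcal{N}_n$, I would take an arbitrary $v\in\mathcal{A}_n\cap\mathcal{N}_n$ and dichotomize. If $J_{\lambda,n}(v)\geq\frac{a^2}{4(S^{-2}-b)}$, then $I_{\lambda,n}(v)>I_{\lambda,n}(u_n)$ directly by the upper bound. Otherwise, the a priori estimate above combined with H\"older's inequality and the Sobolev embedding gives
\[
\|v\|_{4-\xi_n}^2\;\leq\;|\Omega|^{\frac{\xi_n}{2(4-\xi_n)}}S^{-1}\|v\|^2,
\]
and the second factor of the $\max$ in \eqref{R1} is precisely what forces this bound to be $\leq R^2$, so $v\in \mathcal{C}_{n,R}$ and the infimum property of Lemma \ref{minim} closes the argument. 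The delicate point, and the reason for the slightly baroque form of \eqref{R1}, is exactly this translation of a $W_0^{1,2}$ a priori bound into an $L^{4-\xi_n}$ bound: the two-term $\max$ is engineered so that both $\|v\|\leq R$ and $\|v\|_{4-\xi_n}\leq R$ hold for every competitor below the threshold energy.
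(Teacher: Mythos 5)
Your proposal is correct and follows essentially the same route as the paper: identify $I_{\lambda,n}$ with $J_{\lambda,n}$ on $\mathcal{N}_n$, chain Lemmas \ref{minim} and \ref{mp1}, and use the coercivity estimate \eqref{Ju0} together with the two-term $\max$ in \eqref{R1} to show every low-energy competitor in $\mathcal{A}_n\cap\mathcal{N}_n$ already lies in $\mathcal{C}_{n,R}$, so the constrained and unconstrained infima coincide. The only (harmless) deviation is your sharper lower bound $J_{\lambda,n}(v)\geq \frac{a(p-2)^2}{2p^2}\|v\|^2$ in place of the paper's $\frac{a(p-2)^2}{8p}\|v\|^2$; since $p<4$ both are compatible with the choice of $R$ in \eqref{R1}.
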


\begin{proof}
\;	For each $u\in \mathcal{N}_n$ one has $f(\Psi_{\lambda,n}(u))=\|u\|$ and so $J_{\lambda,n}(u)=I_{\lambda,n}(u)$. Consequently,
	\begin{equation*}
		\inf_{u\in \mathcal{A}_n\cap \mathcal{N}_n}J_{\lambda,n}=\inf_{u\in \mathcal{A}_n\cap \mathcal{N}_n}I_{\lambda,n},
	\end{equation*}
	and, by Lemma \ref{minim},
	\begin{eqnarray}\label{JI}
		&&J_{\lambda,n}(u_n)=\inf_{u\in \mathcal{A}_n\cap \mathcal{N}_n\cap \mathcal{C}_{n,R}}J_{\lambda,n}=\inf_{u\in \mathcal{A}_n\cap \mathcal{N}_n\cap
			\mathcal{C}_{n,R}}I_{\lambda,n}=I_{\lambda,n}(u_n).
	\end{eqnarray}
	Moreover, by Lemmas \ref{mp1} and \ref{minim},
	\begin{eqnarray}\label{JI1}
		\frac{a(p-2)(2-\sigma_n)}{2p(4-\sigma_n)}r^2 \leq \inf_{ \mathcal{A}_n\cap \mathcal{N}_n\cap \mathcal{C}_{n,R}}J_{\lambda,n}=\inf_{\mathcal{A}_n\cap
			\mathcal{N}_n\cap \mathcal{C}_{n,R}}I_{\lambda,n}<\frac{a^2}{4(S^{-2}-b)}.
	\end{eqnarray}
	Now, recalling that $\sigma_n\in (0,4-p)$, by \eqref{Ju0} one has
	\begin{equation*}
		J_{\lambda,n}(u)\geq a\frac{(p-2)^2}{8p}\|u\|^2\geq a S\frac{(p-2)^2}{8p}\|u\|_4^2\geq a |\Omega|^{-\frac{\xi_n}{2(4-\xi_n)}}
		S\frac{(p-2)^2}{8p}\|u\|_{4-\xi_n}^2
	\end{equation*}
	for each $u\in \mathcal{A}_n\cap \mathcal{N}_n$. Therefore, by \eqref{JI1},
	\begin{eqnarray}\label{ineq7}
		a |\Omega|^{-\frac{\xi_n}{2(4-\xi_n)}}S\frac{(p-2)^2}{8p}\|u\|_{4-\xi_n}^2\leq a\frac{(p-2)^2}{8p}\|u\|^2<\frac{a^2}{4(S^{-2}-b)}
	\end{eqnarray}
	for each $u\in \mathcal{A}_n\cap \mathcal{N}_n$ such that
	\begin{equation}\label{ineq20}
		I_{\lambda,n}(u)=J_{\lambda,n}(u)<\frac{a^2}{4(S^{-2}-b)}
	\end{equation}
	Recalling the choice of $R$  (see (\ref{R1})), by $(\ref{ineq7})$ one has
	\begin{eqnarray*}
		\|u\|\leq R \ \ \ \ {\rm and } \ \ \ \ \|u\|_{4-\xi_n}\leq R
	\end{eqnarray*}
	for each $u\in \mathcal{A}_n\cap \mathcal{N}_n$ satisfying \eqref{ineq20} and so, in particular, for $u=u_n$. Consequently,
	\begin{equation*}
		\inf_{\mathcal{A}_n\cap \mathcal{N}_n}I_{\lambda,n}=\inf_{\mathcal{A}_n\cap \mathcal{N}_n\cap \mathcal{C}_{n,R}}I_{\lambda,n}.
	\end{equation*}
	The conclusion, then, follows from the previous equality, \eqref{JI} and \eqref{JI1}.
\end{proof}

\begin{lemma}\label{minim3}
	Assume that
	\begin{equation}\label{bsmall}
		b\leq\frac{(p-2)^2}{4}S^{-2}.
	\end{equation}
	Then, if $n\in \N$ is large enough and $u_n\in \mathcal{A}_n\cap \mathcal{N}_n$ is as in Lemma \ref{minim}, one has
	\begin{equation}\label{strictineq1}
		2a\|u_n\|^2+b(4-\sigma_n)\|u_n\|^{4-\sigma_n}<(4-\sigma_n)\|u_n\|^{4-\sigma_n}_{4-\sigma_n}+\lambda p \|u_n\|_p^p.
	\end{equation}
\end{lemma}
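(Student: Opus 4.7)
My plan is to argue by contradiction: suppose that along some subsequence (still denoted $\{u_n\}$) the desired strict inequality fails, so that $K_{\lambda,n}(u_n)=0$ for all $n$ in the subsequence (recall that $u_n\in\mathcal{N}_n$ gives $f(\Psi_{\lambda,n}(u_n))=\|u_n\|$, and $u_n\in\mathcal{A}_n$ already forces $K_{\lambda,n}(u_n)\le 0$). I would then combine the Nehari identity $a\|u_n\|^2+b\|u_n\|^{4-\sigma_n}=\|u_n\|_{4-\sigma_n}^{4-\sigma_n}+\lambda\|u_n\|_p^p$ with $K_{\lambda,n}(u_n)=0$; eliminating the $L^p$ term (by taking a $(4-\sigma_n)$--multiple of Nehari minus the $K$-identity) and the $L^{4-\sigma_n}$ term (by taking a $p$-multiple of Nehari minus the $K$-identity) yields
\begin{equation*}
\|u_n\|_{4-\sigma_n}^{4-\sigma_n}=b\|u_n\|^{4-\sigma_n}-\frac{(p-2)a}{4-p-\sigma_n}\|u_n\|^2, \qquad \lambda\|u_n\|_p^p=\frac{(2-\sigma_n)a}{4-p-\sigma_n}\|u_n\|^2.
\end{equation*}

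The first identity, together with $\|u_n\|_{4-\sigma_n}^{4-\sigma_n}\ge 0$, produces the crucial lower bound $b\|u_n\|^{2-\sigma_n}\ge \frac{(p-2)a}{4-p-\sigma_n}$. Since Lemmas \ref{minim} and \ref{minim2} pinch $r\le \|u_n\|\le R$, the correction $\|u_n\|^{\sigma_n}\to 1$ is harmless and passing to the lower limit yields
\begin{equation*}
\liminf_{n\to\infty}\|u_n\|^2\ge \frac{(p-2)a}{b(4-p)}.
\end{equation*}
For the matching upper bound, I would substitute both identities into $I_{\lambda,n}$ exactly as in the passage leading to \eqref{Ju0}; the telescoping collapses the energy to $I_{\lambda,n}(u_n)=\frac{(p-2)(2-\sigma_n)a}{2p(4-\sigma_n)}\|u_n\|^2$. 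Combining with the strict ceiling $I_{\lambda,n}(u_n)<\frac{a^2}{4(S^{-2}-b)}-\gamma$ furnished by Lemma \ref{minim2} and letting $n\to\infty$ gives
\begin{equation*}
\limsup_{n\to\infty}\|u_n\|^2 \le \frac{pa}{(p-2)(S^{-2}-b)}-\frac{4p\gamma}{(p-2)a}.
\end{equation*}

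The contradiction then comes from the standing hypothesis on $b$. Using the algebraic identity $(p-2)^2+p(4-p)=4$, the condition $b\le \frac{(p-2)^2}{4}S^{-2}$ is equivalent to $(p-2)^2(S^{-2}-b)\ge pb(4-p)$, and hence to $\frac{(p-2)a}{b(4-p)}\ge \frac{pa}{(p-2)(S^{-2}-b)}$. Chaining this with the two bounds above gives
\begin{equation*}
\frac{pa}{(p-2)(S^{-2}-b)}\le \frac{pa}{(p-2)(S^{-2}-b)}-\frac{4p\gamma}{(p-2)a},
\end{equation*}
which is impossible because $\gamma>0$. The only non-routine step is recognizing that the threshold $b\le \frac{(p-2)^2}{4}S^{-2}$ is exactly tuned so that the lower bound on $\|u_n\|^2$ produced by $K_{\lambda,n}=0$ meets the upper bound produced by the energy ceiling $\frac{a^2}{4(S^{-2}-b)}$; the strictly positive gap $\gamma$ from Lemma \ref{mp1} then supplies the contradiction.
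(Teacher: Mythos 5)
Your proof is correct and follows essentially the same route as the paper: argue by contradiction from $K_{\lambda,n}(u_n)=0$, combine it with the Nehari identity to get the lower bound $b\|u_n\|^{2-\sigma_n}\gtrsim \frac{(p-2)a}{4-p-\sigma_n}$, pair it with the upper bound coming from the energy ceiling $\frac{a^2}{4(S^{-2}-b)}-\gamma$ of Lemma \ref{minim2}, and observe that $b\le\frac{(p-2)^2}{4}S^{-2}$ (via $(p-2)^2+p(4-p)=4$) makes the two bounds incompatible. The only blemish is that the parenthetical labels for which term each linear combination eliminates are swapped, but the displayed identities themselves are the correct ones.
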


\begin{proof}
\;	Let $n\in \N$ and $u_n\in \mathcal{A}_n\cap\mathcal{N}_n$ be as in Lemma \ref{minim2}. One has $f(\Psi_{\lambda,n}(u))=\|u_n\|$. Then, since
	$u_n\in \mathcal{A}_n$, one has
	\begin{equation*}
		2a \|u_n\|^2+b(4-\sigma_n)\|u_n\|^{4-\sigma_n}\leq (4-\sigma_n)\|u_n\|^{4-\sigma_n}_{4-\sigma_n}+\lambda p \|u_n\|_p^p.
	\end{equation*}
	Arguing by contradiction, assume that the above relation holds as an equality for infinitely many $n$'s. So, $u_n$ satisfies
	\begin{equation*}
		\left\{\begin{array}{l}
			a\|u_n\|^2+b\|u_n\|^{4-\sigma_n}=\|u_n\|_{4-\sigma_n}^{4-\sigma_n}+\lambda  \|u_n\|_p^p \smallskip\\
			2a\|u_n\|^2 +b(4-\sigma_n)\|u_n\|^{4-\sigma_n}=(4-\sigma_n)\|u_n\|_{4-\sigma_n}^{4-\sigma_n}+\lambda p\|u_n\|_p^p \smallskip\\
			I_{\lambda,n}(u_n)<\displaystyle\frac{a^2}{4(S^{-2}-b)}-\gamma.
		\end{array}\right.
	\end{equation*}
	Then, we infer
	\begin{equation*}
		\left\{\begin{array}{l}
			a(p-2)\|u_n\|^2=(4-p-\sigma_n)(b\|u_n\|^{4-\sigma_n}-\|u_n\|_{4-\sigma_n}^{4-\sigma_n})\leq b(4-p-\sigma_n)\|u_n\|^{4-\sigma_n}\smallskip\\
			\displaystyle{\|u_n\|^2<\frac{4-\sigma_n}{2-\sigma_n}\cdot\frac{ap}{2(p-2)(S^{-2}-b)}-\gamma},
		\end{array}\right.
	\end{equation*}
	and therefore,
	\begin{equation*}
		\left(\frac{a(p-2)}{b(4-p-\sigma_n)}\right)^{\frac{2}{2-\sigma_n}}\leq \|u_n\|^2<\frac{4-\sigma_n}{2-\sigma_n}\cdot\frac{ap}{2(p-2)(S^{-2}-b)}-\gamma.
	\end{equation*}
	Passing to the limit as $n\rightarrow \infty$, we get
	\begin{equation*}
		\frac{a(p-2)}{b(4-p)}< \frac{ap}{(p-2)(S^{-2}-b)},
	\end{equation*}
	that is,
	$$
	b>\frac{(p-2)^2}{4}S^{-2},
	$$
	against \eqref{bsmall}.
\end{proof}

\medskip

The next lemma states that, if $b$ is below the threshold assumed in Lemma \ref{minim3}, then the approximating problem has a solution.

\begin{lemma}\label{minim4}
	Let $b\leq \displaystyle\frac{(p-2)^2}{4}S^{-2}$, $n\in \N$, and let $u_n\in \mathcal{A}_n\cap \mathcal{N}_n$ be as in Lemma \ref{minim2}.  Then,
	$I'_{\lambda,n}(u_n)=0$
\end{lemma}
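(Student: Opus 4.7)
The plan is to obtain $I'_{\lambda,n}(u_n)=0$ from a Lagrange multiplier argument, exploiting the fact that at $u_n$ only the ``Nehari'' constraint is active. First I observe that by Lemma \ref{minim3}, $K_{\lambda,n}(u_n)<0$, and by inspection of the proof of Lemma \ref{minim2} (inequality \eqref{ineq7} together with the choice of $R$ in \eqref{R1}), one has $\|u_n\|_{4-\xi_n}<R$. Consequently, in a neighborhood of $u_n$ in $\W\setminus\{0\}$, the two inequality constraints defining $\mathcal{A}_n\cap\mathcal{C}_{n,R}$ hold strictly and $u_n$ is a (local) minimizer of $J_{\lambda,n}$ on the level set
\[
\mathcal{N}_n=\{u\in\W\setminus\{0\}:\ h(u):=I'_{\lambda,n}(u)(u)=0\}.
\]

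Next I show that $\mathcal{N}_n$ is a $C^1$-manifold near $u_n$. A direct computation gives
\[
h'(u_n)(u_n)=2a\|u_n\|^2+b(4-\sigma_n)\|u_n\|^{4-\sigma_n}-(4-\sigma_n)\|u_n\|_{4-\sigma_n}^{4-\sigma_n}-\lambda p\|u_n\|_p^p,
\]
which, since $u_n\in\mathcal{N}_n$ forces $f(\Psi_{\lambda,n}(u_n))=\|u_n\|$, equals $K_{\lambda,n}(u_n)$ and is therefore strictly negative by Lemma \ref{minim3}. In particular $h'(u_n)\not\equiv 0$, and the Lagrange Multiplier Theorem produces $\mu\in\R$ with $J'_{\lambda,n}(u_n)=\mu\, h'(u_n)$ in $\W^*$.

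To identify $\mu$, I test this identity against $u_n$. Using the expression of $J'_{\lambda,n}$ from \eqref{Jder} with the shorthand $g_1:=g_1(\Psi_{\lambda,n}(u_n))$ and the relation $g_1\bigl(2a+(4-\sigma_n)b\|u_n\|^{2-\sigma_n}\bigr)=a+b\|u_n\|^{2-\sigma_n}$ (which follows from the definition of $g_1$ and of $f$ on $\mathcal{N}_n$), a short algebraic manipulation combined with the Nehari identity $a\|u_n\|^2+b\|u_n\|^{4-\sigma_n}=\|u_n\|_{4-\sigma_n}^{4-\sigma_n}+\lambda\|u_n\|_p^p$ yields
\[
J'_{\lambda,n}(u_n)(u_n)=-g_1\,K_{\lambda,n}(u_n).
\]
Since $h'(u_n)(u_n)=K_{\lambda,n}(u_n)\ne 0$, I conclude $\mu=-g_1$.

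Finally, I verify that the Lagrange relation with this specific $\mu$ collapses to the equation $I'_{\lambda,n}(u_n)=0$. Indeed, plugging $\mu=-g_1$ into $J'_{\lambda,n}(u_n)+g_1\,h'(u_n)$, the terms involving $u_n^{3-\sigma_n}$ and $u_n^{p-1}$ reduce, respectively, to $-u_n^{3-\sigma_n}$ and $-\lambda u_n^{p-1}$, while the gradient part becomes $g_1\bigl(2a+(4-\sigma_n)b\|u_n\|^{2-\sigma_n}\bigr)\int_\Omega\nabla u_n\nabla\varphi\,dx=(a+b\|u_n\|^{2-\sigma_n})\int_\Omega\nabla u_n\nabla\varphi\,dx$. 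Altogether,
\[
0=J'_{\lambda,n}(u_n)(\varphi)+g_1 h'(u_n)(\varphi)=I'_{\lambda,n}(u_n)(\varphi), \quad\text{for all } \varphi\in\W,
\]
which is the desired conclusion. The main delicate point is the check that both auxiliary constraints ($K_{\lambda,n}\le 0$ and $\|\cdot\|_{4-\xi_n}\le R$) are \emph{strict} at $u_n$: without this, one would have to invoke a two- or three-multiplier Lagrange relation (as in the proof of Lemma \ref{minim}) and rule out additional multipliers via the non-existence of common zeros of the $g_1,g_2$ systems; the hypothesis $b\le\frac{(p-2)^2}{4}S^{-2}$ is precisely what forces the key strict inequality via Lemma \ref{minim3}.
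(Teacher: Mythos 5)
Your proof is correct, and its skeleton is the same as the paper's: observe that the auxiliary constraints $K_{\lambda,n}\le 0$ and $\|\cdot\|_{4-\xi_n}\le R$ are strict at $u_n$ (so only the Nehari constraint is active), note that Lemma \ref{minim3} makes that constraint non-degenerate at $u_n$, apply the Lagrange Multiplier Theorem, and identify the multiplier. The one genuine difference is your choice of objective functional on $\mathcal{N}_n$. The paper minimizes $I_{\lambda,n}$ (which coincides with $J_{\lambda,n}$ on $\mathcal{N}_n$, since $f(\Psi_{\lambda,n}(u))=\|u\|$ there) subject to $R_{\lambda,n}(u):=I'_{\lambda,n}(u)(u)=0$; testing the relation $I'_{\lambda,n}(u_n)+\mu R'_{\lambda,n}(u_n)=0$ against $u_n$ then kills the first term outright by the Nehari identity, so $\mu R'_{\lambda,n}(u_n)(u_n)=0$ and $\mu=0$ immediately by \eqref{R}. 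You instead minimize $J_{\lambda,n}$, which forces you to compute $J'_{\lambda,n}(u_n)(u_n)=-g_1K_{\lambda,n}(u_n)$, deduce $\mu=-g_1$, and then verify that $J'_{\lambda,n}(u_n)+g_1h'(u_n)$ reassembles into $I'_{\lambda,n}(u_n)$ via the identity $g_1\bigl(2a+(4-\sigma_n)b\|u_n\|^{2-\sigma_n}\bigr)=a+b\|u_n\|^{2-\sigma_n}$. I checked this algebra and it is right, so nothing is lost; but the paper's choice of objective buys a one-line identification of the multiplier, whereas yours requires the explicit cancellation. Your closing remark about why the strictness of the inactive constraints matters (avoiding a three-multiplier relation as in Lemma \ref{minim}) is a point the paper leaves implicit, and it is a worthwhile clarification.
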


\begin{proof}
\;	By Lemmas \ref{minim2} and \ref{minim3}, we infer that $u_n$ is a local minimum of $I_{\lambda,n}$ restricted to $\mathcal{N}_n$. Consider the $C^1$ functional $R_{\lambda,n}:\W\rightarrow \R$ defined by
	\begin{equation}
		R_{\lambda,n}(u):=I_{\lambda,n}'(u)(u), \quad\text{ for each } u\in \W.
	\end{equation}
	By Lemma \ref{minim3}, one has
	\begin{equation}\label{R}
		R_{\lambda,n}'(u_n)(u_n)= 2a\|u_n\|^2+b(4-\sigma_n)\|u_n\|^{4-\sigma_n}-(4-\sigma_n)\|u_n\|^{4-\sigma_n}_{4-\sigma_n}-\lambda p \|u_n\|_p^p<0.
	\end{equation}
	Thus, by the Lagrange Multipliers Theorem, there exists $\mu\in \R$ such that
	\begin{equation}\label{critical1}
		I'_{\lambda,n}(u_n)(\varphi)+\mu R_{\lambda,n}'(u_n)(\varphi)=0, \quad \text{ for each } \varphi\in \W.
	\end{equation}
	Testing the previous equality with $\varphi=u_n$, we get
	\begin{equation*}
		0=I'_{\lambda,n}(u_n)(u_n)+\mu R_{\lambda,n}'(u_n)(u_n)=\mu R_{\lambda,n}'(u_n)(u_n),
	\end{equation*}
	which, in view of \eqref{R},  implies $\mu=0$. By this and \eqref{critical1}, the conclusion follows.
\end{proof}

\medskip

We are finally in a position to show that \eqref{problem} has a solution.
\begin{theorem}\label{minim5}
	Let $b\leq \frac{(p-2)^2}{4}S^{-2}$ and, for each $n\in \N$, let $u_n\in \mathcal{A}_n\cap \mathcal{N}_n$ be as in Lemma \ref{minim2}.  Then, $\{u_n\}$
	strongly converges  in $\W$ to a solution $u^*$ of problem $(P_\lambda)$.
\end{theorem}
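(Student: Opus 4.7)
The plan is to pass to the limit in the approximating equations, using Brezis--Lieb and the strict energy gap of Lemma \ref{minim2} to exclude concentration. From Lemmas \ref{minim4} and \ref{minim2}, each $u_n$ is a positive critical point of $I_{\lambda,n}$ with $\|u_n\|\le R$ and $I_{\lambda,n}(u_n)\in\bigl[\tfrac{a(p-2)(2-\sigma_n)}{2p(4-\sigma_n)}r^2,\ \tfrac{a^2}{4(S^{-2}-b)}-\gamma\bigr)$, so along a subsequence $u_n\rightharpoonup u^*$ in $\W$, $u_n\to u^*$ strongly in $L^m(\Omega)$ for every $m\in[1,4)$ and a.e.\ in $\Omega$, and $\|u_n\|\to l\in[r,R]$. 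Passing to the limit in $I'_{\lambda,n}(u_n)(\varphi)=0$ for $\varphi\in C_0^1(\overline{\Omega})$ (and extending by density to $\W$), with dominated convergence on the lower order term and Vitali's theorem on the critical-like term $u_n^{3-\sigma_n}\varphi$ (which is bounded in $L^{4/3}(\Omega)$ via Sobolev), I obtain that $u^*$ solves the frozen Kirchhoff equation
\[
(a+bl^2)\int_\Omega\nabla u^*\cdot\nabla\varphi\,dx=\int_\Omega (u^*)^{3}\varphi\,dx+\lambda\int_\Omega (u^*)^{p-1}\varphi\,dx,\qquad\forall\,\varphi\in\W.
\]

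The heart of the proof is to force $l=\|u^*\|$. Testing the frozen equation against $u^*$, comparing with the limit of the Nehari relation $I'_{\lambda,n}(u_n)(u_n)=0$, and writing $v_n:=u_n-u^*\rightharpoonup 0$, I combine Brezis--Lieb in $L^4$, H\"older's inequality to move from the variable exponent $4-\sigma_n$ to $4$ (using $\sigma_n\to 0^+$ and boundedness of $\|u_n\|_4$), and the Sobolev inequality $\|v_n\|_4^4\le S^{-2}\|v_n\|^4$, to arrive at either $v_n\to 0$ in $\W$ (strong convergence, our goal) or the concentration lower bound
\[
l^2\ \ge\ \frac{S^2 a+\|u^*\|^2}{1-S^2 b}.
\]
In parallel, the identity $I_{\lambda,n}(u_n)=I_{\lambda,n}(u_n)-\tfrac{1}{4-\sigma_n}I'_{\lambda,n}(u_n)(u_n)$ gives in the limit $\lim I_{\lambda,n}(u_n)=\tfrac{a}{4}l^2-\tfrac{\lambda(4-p)}{4p}\|u^*\|_p^p$, into which the concentration lower bound yields
\[
\lim_{n\to\infty}I_{\lambda,n}(u_n)\ \ge\ \frac{a^2}{4(S^{-2}-b)}+\frac{a\|u^*\|^2}{4(1-S^2 b)}-\frac{\lambda(4-p)}{4p}\|u^*\|_p^p,
\]
to be compared with the strict upper bound $<\tfrac{a^2}{4(S^{-2}-b)}-\gamma$ of Lemma \ref{minim2}; for $u^*\equiv 0$ this is an immediate contradiction.

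For the nontrivial case $u^*\not\equiv 0$, I would invoke the other half of the energy window, $I_{\lambda,n}(u_n)\ge\tfrac{a(p-2)(2-\sigma_n)}{2p(4-\sigma_n)}\|u_n\|^2$ from Lemma \ref{minim}, which in the limit forces $\tfrac{\lambda(4-p)}{4p}\|u^*\|_p^p\le\tfrac{a}{2p}l^2$; combined with the concentration lower bound on $l^2$ and the hypothesis $b\le\tfrac{(p-2)^2}{4}S^{-2}$ (equivalently $1-S^2 b\ge\tfrac{p(4-p)}{4}$), the remaining algebra closes into a contradiction with $\gamma>0$. Hence $l=\|u^*\|$ and $u_n\to u^*$ strongly in $\W$; the frozen equation becomes $(a+b\|u^*\|^2)(-\Delta u^*)=(u^*)^3+\lambda(u^*)^{p-1}$, so $u^*$ is a weak solution of \eqref{problem}. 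Non-triviality follows from $I_\lambda(u^*)=\lim I_{\lambda,n}(u_n)\ge\tfrac{a(p-2)}{4p}r^2>0$, and the strong maximum principle applied to $-\Delta u^*=(a+b\|u^*\|^2)^{-1}\bigl((u^*)^3+\lambda(u^*)^{p-1}\bigr)\ge 0$ gives $u^*>0$ in $\Omega$. The main obstacle is the concentration step in the $u^*\not\equiv 0$ case: unlike classical Brezis--Nirenberg, here $u^*$ solves a shifted equation with coefficient $a+bl^2\neq a+b\|u^*\|^2$, so its residual energy is not manifestly non-negative, and both the assumption $b\le\tfrac{(p-2)^2}{4}S^{-2}$ and the strict gap $\gamma>0$ of Lemma \ref{minim2} are essential to close the algebra.
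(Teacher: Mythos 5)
Your overall strategy is the same as the paper's: pass to the limit in $I'_{\lambda,n}(u_n)=0$ to obtain the ``frozen'' equation with coefficient $a+bl^2$, and rule out loss of compactness by playing the quantified energy bound of Lemma \ref{minim2} against a lower bound on $l^2$ forced by concentration. The one methodological difference is that you extract the bound $l^2\ge\frac{a+S^{-2}\|u^*\|^2}{S^{-2}-b}$ by a Brezis--Lieb splitting of $\|u_n\|_4^4$, whereas the paper runs Lions' concentration--compactness with cutoffs $\varphi_\rho$ around each atom and shows $\mu_k\ge S^2(a+bl^2)$; both routes deliver the identical inequality (your version needs only the one-sided H\"older bound $\|u_n\|_{4-\sigma_n}^{4-\sigma_n}\le|\Omega|^{\sigma_n/4}\|u_n\|_4^{4-\sigma_n}$ to handle the variable exponent, which suffices, and is lighter than the measure-theoretic machinery).

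However, there is a genuine gap exactly where you flag ``the main obstacle'': in the case $u^*\not\equiv 0$ the ingredients you list do not close the contradiction. What you have at that point is (a) $l^2\ge\frac{a}{S^{-2}-b}+\frac{S^{-2}}{S^{-2}-b}\|u^*\|^2$, (b) $\frac{a}{4}l^2-\frac{\lambda(4-p)}{4p}\|u^*\|_p^p<\frac{a^2}{4(S^{-2}-b)}-\gamma$, and (c) $\frac{\lambda(4-p)}{4p}\|u^*\|_p^p\le\frac{a}{2p}l^2$. Combining (b) and (c) gives $l^2<\frac{p}{p-2}\cdot\frac{a}{S^{-2}-b}$, and combining that with (a) gives only the upper bound $\|u^*\|^2<\frac{2aS^2}{p-2}$ together with $\frac{S^{-2}}{S^{-2}-b}\|u^*\|^2<\lambda\frac{4-p}{ap}\|u^*\|_p^p$ --- no contradiction, for any value of $b$. (Indeed, bounding $\|u^*\|_p^p\le c_p^p\|u^*\|^p$ at this stage yields only a lower bound on $\lambda$, which is the mechanism behind Theorem \ref{minim5bis}, not behind this theorem.) The missing ingredient is the Nehari identity for the frozen equation: testing it with $u^*$ and discarding $\|u^*\|_4^4\ge 0$ gives $\lambda\|u^*\|_p^p\le(a+bl^2)\|u^*\|^2$, which turns the previous inequality into $\frac{S^{-2}}{S^{-2}-b}<\frac{4-p}{p}\left(1+\frac{b}{a}l^2\right)$ and, after inserting $l^2<\frac{p}{p-2}\cdot\frac{a}{S^{-2}-b}$, forces $b>\frac{(p-2)^2}{4-p}S^{-2}$, contradicting $b\le\frac{(p-2)^2}{4}S^{-2}$. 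You correctly diagnosed that the coefficient $a+bl^2\neq a+b\|u^*\|^2$ is the source of the difficulty, but the argument as written asserts rather than performs this step, and without the frozen Nehari relation the ``remaining algebra'' does not close.
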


\begin{proof}
\;	By Lemma \ref{minim2}, we know that $\{u_n\}$ is a bounded sequence in $\W$ of non-negative functions. Therefore, up to a subsequence, we may assume
	that there exist a non-negative  $u_*\in \W$, and two measures $d\mu,d\nu$ such that
	\begin{equation}\label{conv}
		\left\{\begin{array}{ll}
			u_n\rightarrow u_* & \text{weakly in } \W;\\[2mm]
			u_n\rightarrow u_*  & \text{strongly in } L^m(\Omega), \text{ for each } m\in [1,4);\\[2mm]
			|\nabla u_n|^2 \rightarrow d\mu &\text{weakly}^*-\text{ in the sense of measures};\\[2mm]
			u_n^4 \rightarrow d\nu,  & \text{weakly}^*- \text{ in the sense of measures},
		\end{array}\right.
	\end{equation}
	with
	\begin{equation}\label{wsconv}
		\left\{\begin{array}{ll}
			d\mu\geq |\nabla u_*|^2+\displaystyle\sum_{k\in A}\mu_k\delta_{x_k};\\ \smallskip
			d\nu=  u_*^4+\displaystyle\sum_{k\in A}\nu_k\delta_{x_k};\\ \smallskip
			\mu_k^2 S^{-2}\geq \nu_k>0, \text{ for each } k\in A,
		\end{array}\right.
	\end{equation}
	where $A\subseteq \mathbb{N}$ is an at most countable set, and $x_k\in \overline{\Omega}$. Moreover, we may also assume that
	\begin{equation}\label{convergencetol}
		\lim_{n\rightarrow +\infty}\|u_n\|=:l\in [0,+\infty).
	\end{equation}
	Note that, by Lemma \ref{minim}, one has
	\begin{equation}\label{posl}
		l\geq r>0.
	\end{equation}
	Moreover, since $u_n\in \mathcal{A}_n\cap \mathcal{N}_n$, one has
	\begin{equation}\label{ineq5}
		a\|u_n\|^2+b\|u_n\|^{4-\sigma_n} = \|u_n\|_{4-\sigma_n}^{4-\sigma_n}+\lambda \|u_n\|_p^p
	\end{equation}
	and
	\begin{equation}\label{ineq5bis}
		2a\|u_n\|^2+b(4-\sigma_n)\|u_n\|^{4-\sigma_n}\leq (4-\sigma_n)\|u_n\|_{4-\sigma_n}^{4-\sigma_n}+\lambda p\|u_n\|_p^p.
	\end{equation}
	By \eqref{ineq5} and \eqref{ineq5bis}, we derive that
	\begin{equation}\label{ineq5ter}
		a\|u_n\|^2\geq \lambda \frac{4-p-\sigma_n}{2-\sigma_n}\|u_n\|_p^p
	\end{equation}
	and
	\begin{equation*}
		I_{\lambda,n}(u_n)=a\left(\frac{1}{2}-\frac{1}{4-\sigma_n}\right)\|u_n\|^2-\lambda\left(\frac{1}{p}-\frac{1}{4-\sigma_n}\right)\|u_n\|_p^p.
	\end{equation*}
	Then, taking \eqref{ineq5ter} and Lemma \ref{minim2} into account, it follows
	\begin{align}
		&a\left(\frac{1}{2}-\frac{1}{4-\sigma_n}\right)\|u_n\|^2-\lambda\left(\frac{1}{p}-\frac{1}{4-\sigma_n}\right)\|u_n\|_p^p  <\frac{a^2}{4(S^{-2}-b)}-\gamma,\label{ineq6}\\
		&a\frac{p-2}{2p}\cdot \frac{2-\sigma_n}{4-\sigma_n}\|u_n\|^2 <\frac{a^2}{4(S^{-2}-b)}-\gamma.\label{ineq6ter}
	\end{align}
	After that, passing to the limit as $n\rightarrow \infty$ in \eqref{ineq5} and \eqref{ineq6}, we get
	\begin{equation*}
		(a+bl^2)l^2\leq S^{-2}l^4+\lambda\|u_*\|_p^p \quad \text{and} \quad \frac{a}{4}l^2-\lambda \left(\frac{1}{p}-\frac{1}{4}\right)\|u_*\|_p^p\leq
		\frac{a^2}{4(S^{-2}-b)}-\gamma,
	\end{equation*}
	that is, in view of \eqref{posl},
	\begin{equation}\label{ineq10}
		l^2\geq \frac{a}{S^{-2}-b}- \frac{\lambda}{l^2(S^{-2}-b)}\|u_*\|_p^p  \quad \text{and} \quad l^2\leq \frac{a}{S^{-2}-b}-\frac{4\gamma}{a}+\lambda
		\frac{4-p}{ap}\|u_*\|_p^p,
	\end{equation}
	and this clearly implies that $u_*\neq 0$. Moreover, letting $n\to\infty$ in \eqref{ineq6ter}, we get
	\begin{equation}\label{ineql1}
		\frac{p-2}{p}l^2<\frac{a}{S^{-2}-b}.
	\end{equation}
	Now, if $\varphi \in C_0^\infty(\Omega)$, by Lemma \ref{minim4} and \eqref{conv}, one gets
	\begin{equation}\label{euleq}
		0=\lim_{n\rightarrow +\infty}I_{\lambda,n}'(u_n)(\varphi)=(a+bl^2)\int_\Omega \nabla u_*\nabla \varphi dx-\int_\Omega u_*^3\varphi dx-\lambda \int_\Omega
		u_*^{p-1}\varphi dx.
	\end{equation}
	By density, the above equality actually holds with for any $\varphi\in \W$. In particular, taking $\varphi=u_*$, we get
	\begin{equation}\label{ustar}
		0=(a+bl^2)\|u_*\|^2-\|u_*\|_4^4-\lambda \|u_*\|_p^p.
	\end{equation}
	Note also that, by \eqref{conv} and \eqref{wsconv}, one has
	\begin{equation}\label{ineql}
		l^2\geq \|u_*\|^2+ \sum_{k\in A}\mu_k.
	\end{equation}
	Now, assume that $A\neq \emptyset$ and fix $k\in A$. Fix also $\rho>0$ and let $\varphi_\rho\in C_0^\infty(\R^n)$ be such that
	\begin{equation*}
		\left\{
		\begin{array}{ll}
			0\leq \varphi_\rho(x)\leq 1 \;\text{ and }\; |\nabla \varphi_\rho(x)|\leq \displaystyle\frac{2}{\rho}  & \text{ for all } x\in \R^n;\\ \smallskip
			\varphi_\rho(x)= 0 & \text{ if } |x-x_k|\geq 2\rho;\\ \smallskip
			\varphi_\rho(x)= 1 & \text{ if } |x-x_k|\leq \rho.
		\end{array}
		\right.
	\end{equation*}
	Then, $u_n\varphi_\rho\in \W$ for each $n\in \N$, and since $I'_{\lambda,u_n}(u_n)=0$, we get
	\begin{equation}\label{eq2}
		\begin{split}
			0&=I'_{\lambda,n}(u_n)(u_n\varphi_\rho)\\
			&=(a+b\|u_n\|^{2-\sigma_n})\int_\Omega \left(|\nabla u_n|^2\varphi_\rho+u_n \nabla u_n \nabla \varphi_\rho
			\right)dx-\int_\Omega (u_n^{4-\sigma_n}+\lambda u_n^p)\varphi_\rho dx.
		\end{split}
	\end{equation}
	Moreover, using H\"older's inequality and the fact that $\sup_{\R^n}|\varphi_\rho|\leq 1$, one has
	\begin{equation*}
		\int_\Omega u_n^{4-\sigma_n}\varphi_\rho dx\leq |\Omega|^{\frac{\sigma_n}{4}}\left(\int_\Omega
		u_n^4\varphi_\rho^{\frac{4}{4-\sigma_n}}dx\right)^{\frac{4-\sigma_n}{4}}\leq |\Omega|^{\frac{\sigma_n}{4}}\left(\int_\Omega u_n^4\varphi_\rho
		dx\right)^{\frac{4-\sigma_n}{4}}
	\end{equation*}
	and
	\begin{equation*}
		\int_\Omega u_n \nabla u_n \nabla \varphi_\rho dx\leq \|u_n\|\left(\int_\Omega u_n^2|\nabla \varphi_\rho|^2dx\right)^{\frac{1}{2}},
	\end{equation*}
	which, together with \eqref{eq2}, imply
	\begin{equation}\label{ineq8}
		\begin{split}
			0& \geq (a+b\|u_n\|^{2-\sigma_n})\left[\int_\Omega |\nabla u_n|^2\varphi_\rho dx -\|u_n\|\left(\int_\Omega |u_n|^2|\nabla
			\varphi_\rho|^2dxdx\right)^{\frac{1}{2}}\right]\\
			&\;\;\;
			-|\Omega|^{\frac{\sigma_n}{4}}\left(\int_\Omega u_n^4\varphi_\rho dx\right)^{\frac{4-\sigma_n}{4}}-\lambda \int_{\Omega} u_n^p\varphi_\rho dx.
		\end{split}
	\end{equation}
	
	By \eqref{conv}, one has
	\begin{align*}
		\lim_{n\rightarrow +\infty}\int_\Omega |\nabla u_n|^2\varphi_\rho dx & = \int_\Omega \varphi_\rho d\mu \geq  \int_\Omega |\nabla u_*|^2 \varphi_\rho
		dx+\sum_{j\in A}\mu_j\varphi_\rho(x_j),\\ \smallskip
		\lim_{n\rightarrow +\infty}\|u_n\|\left(\int_\Omega u_n^2|\nabla \varphi_\rho|^2dx\right)^{\frac{1}{2}} & = l \left(\int_\Omega u_*^2|\nabla
		\varphi_\rho|^2dx\right)^{\frac{1}{2}}\\ \smallskip
		\lim_{n\rightarrow +\infty} \int_\Omega u_n^4\varphi_\rho dx  &= \int_\Omega u_*^4\varphi_\rho dx+\sum_{j\in A}\nu_j\varphi_\rho(x_j)\\ \smallskip
		\lim_{n\rightarrow +\infty}\int_\Omega u_n^p\varphi_\rho dx &=\int_\Omega u_*^p\varphi_\rho dx.
	\end{align*}
	Thus, passing to the limit as $n\rightarrow +\infty$ in \eqref{ineq8}, we get
	\begin{equation}\label{ineq9}
		\begin{split}
			0 & \geq (a+bl^2)\left[\int_\Omega |\nabla u_*|^2 \varphi_\rho dx+\sum_{j\in A}\mu_j\varphi_\rho(x_j) - l \left(\int_\Omega u_*^2|\nabla
			\varphi_\rho|^2dx\right)^{\frac{1}{2}}\right]\\
			&\;\;\;-\int_\Omega u_*^4\varphi_\rho dx-\sum_{j\in A}\nu_j\varphi_\rho(x_j)-\lambda \int_\Omega u_*^p\varphi_\rho dx.
		\end{split}
	\end{equation}
	Moreover, since
	\begin{align*}
		0 <l \left(\int_\Omega u_*^2|\nabla \varphi_\rho|^2dx\right)^{\frac{1}{2}} & = l \left(\int_{\Omega\cap \{|x-x_k|<2\rho\}} u_*^2|\nabla \varphi_\rho|^2dx\right)^{\frac{1}{2}}\\
		& \leq l \left(\int_{\Omega\cap \{|x-x_k|<2\rho\}} u_*^4 dx\right)^{\frac{1}{4}}\left(\int_{\Omega\cap \{|x-x_k|<2\rho\}}|\nabla
		\varphi_\rho|^4 dx\right)^{\frac{1}{4}}\\
		& \leq \frac{2l}{\rho}\cdot \left(\int_{\Omega\cap \{|x-x_k|<2\rho\}} u_*^4dx\right)^{\frac{1}{4}}
		\left(\int_{\{|x-x_k|<2\rho\}}dx\right)^{\frac{1}{4}}\\
		& = 2^{7/4}\pi^{1/2}l 
		\left(\int_{\Omega\cap \{|x-x_k|<2\rho\}} u_*^4dx\right)^{\frac{1}{4}} \to 0, \quad \text{as } \rho \rightarrow0,
	\end{align*}
	and
	\begin{align*}
		&\lim_{\rho\rightarrow 0}\int_\Omega |\nabla u_*|^2 \varphi_\rho dx=\lim_{\rho\rightarrow 0}\int_\Omega u_*^4\varphi_\rho dx=\lim_{\rho\rightarrow
			0}\int_\Omega u_*^p\varphi_\rho dx=0,\\
		&\lim_{\rho\rightarrow 0}\sum_{j\in A}\mu_j\varphi_\rho(x_j)=\mu_k,\\
		&\lim_{\rho\rightarrow 0}\sum_{j\in A}\nu_j\varphi_\rho(x_j)=\nu_k,
	\end{align*}
	taking the limit as $\rho\rightarrow 0$ in \eqref{ineq9}, one has
	$$
	0\geq (a+bl^2)\mu_k-\nu_k.
	$$
	Then, by the third inequality in \eqref{wsconv}, we infer
	$$
	0\geq (a+bl^2)\mu_k-S^{-2}\mu_k^2,
	$$
	and thus
	$$
	\mu_k\geq S^2(a+bl^2).
	$$
	By the above inequality and \eqref{ineql}, we get
	$$
	l^2\geq \|u_*\|^2+ S^2(a+bl^2)
	$$
	from which
	\begin{equation}\label{inequstar}
		l^2\geq \frac{a}{S^{-2}-b}+\frac{S^{-2}}{S^{-2}-b}\|u_*\|^2.
	\end{equation}
	Therefore, using the second inequality in \eqref{ineq10}, it follows that
	\begin{equation}\label{inequstar1}
		\frac{S^{-2}}{S^{-2}-b}\|u_*\|^2\leq \lambda \frac{4-p}{ap}\|u_*\|_p^p.
	\end{equation}
	Moreover, since by \eqref{ustar} one has
	\begin{equation*}
		\lambda \|u_*\|_p^p<(a+bl^2)\|u_*\|^2,
	\end{equation*}
	we infer
	$$
	\frac{S^{-2}}{S^{-2}-b}\|u_*\|^2<\frac{4-p}{ap}(a+bl^2)\|u_*\|^2,
	$$
	and using \eqref{ineql1}, it also follows
	$$
	\frac{S^{-2}}{S^{-2}-b}<\frac{4-p}{p}\left(1+b\frac{p}{p-2}\frac{1}{S^{-2}-b}\right)=\frac{4-p}{p}+b\frac{4-p}{p-2}\frac{1}{S^{-2}-b},
	$$
	from which
	$$
	b > \frac{(p-2)^2}{4-p}S^{-2},
	$$
	against the choice of $b$. So, it must be $A=\emptyset$. Consequently, by \eqref{conv} and \eqref{wsconv}, we obtain
	$$
	\|u_n\|_4\rightarrow  \|u_*\|_4 \quad \text{as } n\to\infty,
	$$
	which, together with
	$$
	u_n\rightarrow u_*  \quad\text{weakly in } L^4(\Omega) \quad \text{as } n\to\infty,
	$$
	implies
	$$
	u_n\rightarrow u_* \quad\text{strongly in } L^4(\Omega) \quad \text{as } n\to\infty.
	$$
	As a consequence, since $u_n\in \mathcal{N}_n$, for each $n\in \N$, it follows
	$$
	al^2+bl^4=\lambda \|u_*\|_4^4+\lambda\|u_*\|^p,
	$$
	which, together with \eqref{ustar}, gives
	$$
	\|u_*\|^2=l^2=\lim_{n\rightarrow +\infty}\|u_n\|^2.
	$$
	Since, $u_n\rightarrow u_*$ weakly in $\W$, we get that $u_n\rightarrow u_*$ strongly in $\W$. Observe also that, by the Strong Maximum Principle, $u_*$ is also positive in $\Omega$.  Finally, being $l^2=\|u_*\|^2$, by (\ref{euleq}) we infer that $u_*$ is a solution to $(P_\lambda)$.
\end{proof}

\medskip

The conclusion of Lemma \ref{minim3} can be also achieved for any $b\in (0,S^{-2})$ provided that $\lambda$ is small enough. Indeed, setting
\begin{equation*}
	\Lambda_p(a,b):=\frac{2a^{\frac{4-p}{2}}}{c_p^p(4-p)} \left(\frac{p-2}{p}\right)^{\frac{p-2}{2}}(S^{-2}-b)^{\frac{p-2}{2}},
\end{equation*}
we have the following result:

\begin{lemma}\label{minim4bis}
	Assume that $\lambda\leq\Lambda_p(a,b)$. Then, if $n\in \N$ is large enough and $u_n\in \mathcal{A}_n\cap \mathcal{N}_n$ is as in Lemma \ref{minim}, one has
	\begin{equation}\label{strictineq1}
		2a\|u_n\|^2+b(4-\sigma_n)\|u_n\|^{4-\sigma_n}<(4-\sigma_n)\|u_n\|^{4-\sigma_n}_{4-\sigma_n}+\lambda p \|u_n\|_p^p
	\end{equation}
\end{lemma}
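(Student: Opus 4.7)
\medskip

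\noindent\textbf{Proof proposal for Lemma \ref{minim4bis}.}

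The plan is to mimic the contradiction argument of Lemma \ref{minim3}, trading the structural hypothesis on $b$ for the smallness hypothesis on $\lambda$. I assume toward a contradiction that \eqref{strictineq1} fails for infinitely many $n$, so that, passing to a subsequence, the inequality in the definition of $\mathcal{A}_n$ is saturated and $u_n$ satisfies the coupled system
\begin{align*}
& a\|u_n\|^2+b\|u_n\|^{4-\sigma_n}=\|u_n\|_{4-\sigma_n}^{4-\sigma_n}+\lambda\|u_n\|_p^p,\\
& 2a\|u_n\|^2+b(4-\sigma_n)\|u_n\|^{4-\sigma_n}=(4-\sigma_n)\|u_n\|_{4-\sigma_n}^{4-\sigma_n}+\lambda p\|u_n\|_p^p,
\end{align*}
together with the bound $I_{\lambda,n}(u_n)<\frac{a^2}{4(S^{-2}-b)}-\gamma$ furnished by Lemma \ref{minim2}. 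Multiplying the first equation by $(4-\sigma_n)$ and subtracting the second, the $L^{4-\sigma_n}$-terms cancel out and one obtains the sharp identity
$$
a(2-\sigma_n)\|u_n\|^2=\lambda(4-p-\sigma_n)\|u_n\|_p^p,
$$
which will play the role that $a(p-2)\|u_n\|^2\le b(4-p-\sigma_n)\|u_n\|^{4-\sigma_n}$ played in the proof of Lemma \ref{minim3}.

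From here two one-sided estimates on $\|u_n\|^{p-2}$ emerge. On the one hand, the Sobolev-type inequality $\|u_n\|_p\le c_p\|u_n\|$ combined with the identity above gives the lower bound
$$
\|u_n\|^{p-2}\ge \frac{a(2-\sigma_n)}{\lambda(4-p-\sigma_n)c_p^p}.
$$
On the other hand, substituting the identity back into the explicit formula
$$
I_{\lambda,n}(u_n)=a\Bigl(\tfrac{1}{2}-\tfrac{1}{4-\sigma_n}\Bigr)\|u_n\|^2-\lambda\Bigl(\tfrac{1}{p}-\tfrac{1}{4-\sigma_n}\Bigr)\|u_n\|_p^p
$$
collapses $I_{\lambda,n}(u_n)$ to the quantity $\frac{a(2-\sigma_n)(p-2)}{2p(4-\sigma_n)}\|u_n\|^2$, and Lemma \ref{minim2} then yields the strict upper bound
$$
\|u_n\|^2<\frac{ap(4-\sigma_n)}{2(2-\sigma_n)(p-2)(S^{-2}-b)}-\frac{2p(4-\sigma_n)\gamma}{a(2-\sigma_n)(p-2)}.
$$

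To close the argument I raise the latter estimate to the power $(p-2)/2$, chain it with the former, and let $n\to\infty$ (so that $\sigma_n\to 0$). The strict $\gamma$-gap survives the passage to the limit, and a short algebraic rearrangement of the resulting inequality for $\lambda$ reveals
$$
\lambda>\frac{2a^{(4-p)/2}}{(4-p)c_p^p}\left(\frac{p-2}{p}\right)^{(p-2)/2}(S^{-2}-b)^{(p-2)/2}=\Lambda_p(a,b),
$$
contradicting the hypothesis $\lambda\le\Lambda_p(a,b)$. The only delicate point I foresee is ensuring that the $\gamma$-term from Lemma \ref{minim2} is genuinely used to produce a \emph{strict} inequality $\lambda>\Lambda_p(a,b)$ rather than the non-strict version; this is what makes the borderline $\lambda=\Lambda_p(a,b)$ also admissible in the statement, and it is the same mechanism that made Lemma \ref{minim3} work at the borderline $b=\frac{(p-2)^2}{4}S^{-2}$.
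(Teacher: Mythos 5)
Your proposal is correct and follows essentially the same route as the paper: the same contradiction via saturation of $K_{\lambda,n}(u_n)\le 0$, the same key identity $a(2-\sigma_n)\|u_n\|^2=\lambda(4-p-\sigma_n)\|u_n\|_p^p$ obtained by eliminating the $\|u_n\|_{4-\sigma_n}^{4-\sigma_n}$ terms, the same two-sided control of $\|u_n\|^{p-2}$ via $c_p$ and the energy bound of Lemma \ref{minim2}, and the same use of the $\gamma$-gap to force the strict inequality $\lambda>\Lambda_p(a,b)$ in the limit $n\to\infty$. The only cosmetic difference is that the paper solves the identity for $\lambda$ and bounds the factors directly, whereas you chain two one-sided estimates on $\|u_n\|^{p-2}$; the algebra is identical.
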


\begin{proof}
\;	Let $n\in \N$ and $u_n\in \mathcal{A}_n\cap\mathcal{N}_n$ be as in Lemma \ref{minim}. One has $f(\Psi_{\lambda,n}(u))=\|u_n\|$. Then, since
	$u_n\in \mathcal{A}_n$, one has
	$$
	2a \|u_n\|^2+b(4-\sigma_n)\|u_n\|^{4-\sigma_n}\leq (4-\sigma_n)\|u_n\|^{4-\sigma_n}_{4-\sigma_n}+\lambda p \|u_n\|_p^p.
	$$
	Arguing by contradiction, we may assume (without loss of generality) that the above holds as an equality for all $n\in \N$. Then, $u_n$ satisfies
	\begin{equation*}
		\left\{\begin{array}{l}
			a\|u_n\|^2+b\|u_n\|^{4-\sigma_n}=\|u_n\|_{4-\sigma_n}^{4-\sigma_n}+\lambda  \|u_n\|_p^p\\[3mm]
			2a\|u_n\|^2+b(4-\sigma_n)\|u_n\|^{4-\sigma_n}=(4-\sigma_n)\|u_n\|_{4-\sigma_n}^{4-\sigma_n}+\lambda p\|u_n\|_p^p\\[3mm]
			\displaystyle\frac{a(p-2)(2-\sigma_n)}{2p(4-\sigma_n)}\|u_n\|^2=I_{\lambda,n}(u_n)<\displaystyle\frac{a^2}{4(S^{-2}-b)}-\gamma,
		\end{array}\right.
	\end{equation*}
	from which
	\begin{equation*}
		\left\{\begin{array}{l}
			\displaystyle{\lambda = \frac{a(2-\sigma_n)}{4-p-\sigma_n}\frac{\|u_n\|^2}{\|u_n\|_p^p}}\\[4mm]
			\displaystyle{\|u_n\|^2 <  \frac{2p(4-\sigma_n)}{a(p-2)(2-\sigma_n)}    \left(\frac{a^2}{4(S^{-2}-b)}-\gamma\right)}.
		\end{array}\right.
	\end{equation*}
	Therefore,
	\begin{align*}
		\lambda & = \frac{a(2-\sigma_n)}{4-p-\sigma_n}\left(\frac{\|u_n\|}{\|u_n\|_p}\right)^p\|u_n\|^{2-p}\\
		&\geq
		\frac{a(2-\sigma_n)}{c_p^p(4-p-\sigma_n)}\left[\frac{2p(4-\sigma_n)}{a(p_2)(2-\sigma_n)}\left(\frac{a^2}{4(S^{-2}-b)}-\gamma\right)\right]^{\frac{2-p}{2}}
	\end{align*}
	Letting $n\rightarrow +\infty$, we finally get
	\begin{align*}
		\lambda & \geq \frac{2a}{c_p^p(4-p)}
		\left[\frac{4p}{a(p-2)}\left(\frac{a^2}{4(S^{-2}-b)}-\gamma\right)\right]^{\frac{2-p}{2}}\\
		&>\frac{2a^{\frac{4-p}{2}}}{c_p^p(4-p)}\left(\frac{p-2}{p}\right)^{\frac{p-2}{2}}(S^{-2}-b)^{\frac{p-2}{2}}\\
		&=\Lambda_p(a,b),
	\end{align*}
	against the assumption.
\end{proof}

Arguing as above, it is straightforward to verify that the same conclusion of Lemma \ref{minim4} can be obtained again for the full range $b\in (0,S^{-2})$, provided that $\lambda\leq \Lambda_p(a,b)$.

\begin{theorem}\label{minim5bis}
	Let $\lambda\leq \Lambda_p(a,b)$ and, for each $n\in \N$, let $u_n\in \mathcal{A}_n\cap \mathcal{N}_n$ be as in Lemma \ref{minim2}.  Then, $\{u_n\}$
	strongly converges in $\W$ to a solution $u^*$ of problem \eqref{problem}.
\end{theorem}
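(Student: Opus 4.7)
The plan is to mirror the proof of Theorem \ref{minim5}, with the hypothesis $\lambda\leq\Lambda_p(a,b)$ replacing the smallness condition on $b$ in the concentration-exclusion step. First I would establish the analogue of Lemma \ref{minim4}: thanks to Lemma \ref{minim4bis} the strict inequality \eqref{strictineq1} holds for $n$ large, so $R'_{\lambda,n}(u_n)(u_n)<0$, the Lagrange multiplier arising from the constraint $u\in\mathcal{N}_n$ must vanish, and hence $I'_{\lambda,n}(u_n)=0$. The opening moves of the proof of Theorem \ref{minim5} then apply verbatim: by Lemma \ref{minim2}, $\{u_n\}$ is bounded in $\W$, so, up to a subsequence, $u_n\rightharpoonup u_*$ in $\W$ with $\|u_n\|\to l\geq r>0$, and the concentration-compactness decomposition \eqref{conv}--\eqref{wsconv} holds. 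Passing to the limit in $I'_{\lambda,n}(u_n)=0$, testing with $u_*$, and using the energy bound of Lemma \ref{minim2} then produces \eqref{ustar}, \eqref{ineq10}, \eqref{ineql1}, and, after the standard localised cut-off argument around a concentration point, the key inequality \eqref{inequstar1}; in particular $u_*\neq 0$.

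The novel ingredient is the exclusion of concentration. Suppose, for contradiction, that $A\neq\emptyset$. From \eqref{inequstar1} and $\|u_*\|_p\leq c_p\|u_*\|$ one obtains
\begin{equation*}
\|u_*\|^{p-2}\geq\frac{apS^{-2}}{\lambda(4-p)c_p^p(S^{-2}-b)},
\end{equation*}
and inserting $\lambda\leq\Lambda_p(a,b)$ and simplifying yields
\begin{equation*}
\|u_*\|^{p-2}\geq\frac{pS^{-2}}{2(S^{-2}-b)}\left(\frac{ap}{(p-2)(S^{-2}-b)}\right)^{\frac{p-2}{2}}.
\end{equation*}
On the other hand, the strict bound \eqref{ineql1} together with $\|u_*\|^2\leq l^2$ gives
\begin{equation*}
\|u_*\|^{p-2}<\left(\frac{ap}{(p-2)(S^{-2}-b)}\right)^{\frac{p-2}{2}}.
\end{equation*}
Since the ratio $\frac{pS^{-2}}{2(S^{-2}-b)}$ is strictly greater than $1$ for every $b\in(0,S^{-2})$ and every $p\in(2,4)$ (indeed $pS^{-2}>2S^{-2}>2(S^{-2}-b)$), the two bounds are incompatible, forcing $A=\emptyset$.

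Once concentration has been ruled out, the strong convergence $u_n\to u_*$ in $L^4(\Omega)$, combined with \eqref{ustar} and the Nehari identity $I'_{\lambda,n}(u_n)(u_n)=0$, forces $\|u_n\|\to\|u_*\|$; together with the weak convergence this yields strong convergence in $\W$. Passing to the limit in \eqref{euleq} then certifies that $u_*$ solves \eqref{problem}, and the Strong Maximum Principle gives positivity. The main obstacle is precisely the concentration-exclusion step: one must hit upon the correct combination of the Sobolev-type lower bound extracted from $\lambda\leq\Lambda_p(a,b)$ with the Nehari-type upper bound from \eqref{ineql1}, so that the ratio of the two reduces to the clean constant $\frac{pS^{-2}}{2(S^{-2}-b)}$, which happily exceeds $1$ throughout the admissible range, without any supplementary smallness condition on $b$.
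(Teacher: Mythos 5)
Your proposal is correct and follows essentially the same route as the paper: repeat the proof of Theorem \ref{minim5} using Lemma \ref{minim4bis} in place of Lemma \ref{minim4}, and rule out concentration by playing the lower bound on $\|u_*\|^{p-2}$ extracted from \eqref{inequstar1} and $\lambda\leq\Lambda_p(a,b)$ against an upper bound coming from \eqref{ineql1}. The only (harmless) difference is in the final algebra: the paper combines \eqref{ineql1} with \eqref{inequstar} to get the sharper bound $\|u_*\|^2<2aS^2/(p-2)$ and concludes $\lambda>(p/2)^{p/2}\Lambda_p(a,b)$, whereas you use the cruder $\|u_*\|^2\leq l^2<ap/\bigl((p-2)(S^{-2}-b)\bigr)$ together with the observation that $pS^{-2}/\bigl(2(S^{-2}-b)\bigr)>1$; both versions close the contradiction.
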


\begin{proof}
\;	We use exactly the same arguments of the proof of Theorem \ref{minim5}, but exploiting Lemma \eqref{minim4bis} instead of Lemma \eqref{minim4}. Doing so, assuming that $\mu_k\neq 0$ for some $k\in \N$, we infer that, for $\lambda\leq \Lambda_p(a,b)$, inequalities \eqref{ineql1},
	\eqref{inequstar} and \eqref{inequstar1} hold. In particular, by these inequalities we get
	\begin{equation*}
		\|u_*\|^2\leq \frac{2aS^2}{p-2} \quad\text{and}\quad \lambda \geq \frac{apS^{-2}}{c_p^p(4-p)(S^{-2}-b)}\cdot \frac{1}{\|u_*\|^{p-2}},
	\end{equation*}
	from which
	\begin{align*}
		\lambda & \geq \frac{a^{\frac{4-p}{2}}}{c_p^p}\cdot \frac{p}{4-p}\left(\frac{p-2}{2}\right)^{\frac{p-2}{2}}\cdot \frac{(S^{-2})^{\frac{p}{2}}}{S^{-2}-b}\\
		& >\frac{a^{\frac{4-p}{2}}}{c_p^p}\cdot \frac{p}{4-p}\left(\frac{p-2}{2}\right)^{\frac{p-2}{2}}\cdot
		(S^{-2}-b)^{\frac{p-2}{2}}\\
		&= \left( \frac{p}{2}\right)^\frac{p}{2}\Lambda_p(a,b)>\Lambda_p(a,b),
	\end{align*}
	a contradiction. Hence, it must be $\mu_k=0$ for all $k\in A$, and by this, as in the proof of Lemma \ref{minim5}, the conclusion follows.
\end{proof}

\section*{Acknowledgment}
The authors are members of the Gruppo Nazionale per l'Analisi Matematica, la Probabilità e le loro Applicazioni (GNAMPA) of the Istituto Nazionale di Alta Matematica (INdAM).


\end{document}